\newcommand \comment[1]{}               
\newcommand \mylabel[1]{\label{#1}\comment{{\rm \{#1\} }}}
\newtheorem{lem}{Lemma}[section]
\newtheorem{cor}[lem]{Corollary}
\newtheorem{prop}[lem]{Proposition}
\newtheorem{thm}[lem]{Theorem}
\newtheorem{exama}[lem]{Example}
\newenvironment{example}{\begin{exama}\rm}{\end{exama}}
\numberwithin{equation}{section}
\renewcommand{\phi}{\varphi}                 
\renewcommand{\epsilon}{\varepsilon}
\newcommand\eset{\varnothing}
\newcommand\inv{^{-1}}
\newcommand\textb{\text{\rm b}}
\newcommand \ind{{:}}
\renewcommand\pmod[1]{\,(\operatorname{mod}#1)}
\newcommand\cupdot {\mbox{\hspace{.15em}$\cup$\hspace{-.47em}$\cdot$\hspace{.4em}}}
\newcommand\bQ{\mathbf{Q}}
\newcommand\cA{\mathcal{A}}
\newcommand\cB{\mathcal{B}}
\newcommand\cC{\mathcal{C}}
\newcommand\cG{\mathcal{G}}
\newcommand\cF{\mathcal{F}}
\newcommand\cH{\mathcal{H}}
\newcommand\cL{\mathcal{L}}
\newcommand\cM{\mathcal{M}}
\newcommand\cP{\mathcal{P}}
\newcommand\bbR{\mathbb{R}}
\newcommand\bbZ{\mathbb{Z}}
\newcommand \fA{\mathfrak A}
\newcommand \fG{\mathfrak G}
\newcommand\setm{\backslash}
\newcommand\G{\Gamma}
\newcommand\chiZ{\chi^\bbZ}
\newcommand\chim{\chi^\mathrm{mod}}
\newcommand\chib{\chi^\textb}
\newcommand\Comp{\operatorname{Comp}}
\newcommand\CT{\mathbf{C}}
\newcommand\CTN{\mathbf{C_0}}
\begin{document}


\begin{center}

{\large\sc 
An Elementary Chromatic Reduction for Gain Graphs\\ and Special Hyperplane Arrangements%
\footnote{October 2, 2007.

Mathematics Subject Classifications (2010): \emph{Primary} 05C22; \emph{Secondary} 05C15, 52C35.

Key words:  Gain graph, integral gain graph, deletion-contraction, loop nullity, weak chromatic function, invariance under simplification, total chromatic polynomial, integral chromatic function, modular chromatic function, affinographic hyperplane, Catalan arrangement, Linial arrangement, Shi arrangement.
}
}\\[20pt]

{\large
Pascal Berthom\'e\footnote{Work performed while at the Laboratoire de Recherche en Informatique, Universit\'e Paris-Sud.} \\
{\small Laboratoire d'Informatique Fondamentale d'Orl\'eans,\\[-0.4ex]
ENSI de Bourges, Universit\'e d'Orl\'eans\\[-0.4ex]
88, Bld Lahitolle\\[-0.4ex]
18020 Bourges Cedex, France\\[-0.4ex]
\texttt{pascal.berthome@ensi-bourges.fr}
}\\[10pt]

{\large
Raul Cordovil\footnote{Partially supported by FCT (Portugal) through the program POCTI.  Much of Cordovil's work was performed while visiting the LRI, Universit\'e Paris-Sud.} \\
\small Departamento de Matematica\\[-0.4ex]
\small Instituto Superior T\'ecnico\\[-0.4ex]
\small Av.\ Rovisco Pais\\[-0.4ex]
\small 1049-001 Lisboa, Portugal\\[-0.4ex]
\small \texttt{rcordov@math.ist.utl.pt}
}\\[10pt]

{\large
David Forge \\
\small Laboratoire de Recherche en Informatique, UMR 8623\\[-0.4ex]
\small B\^at.\ 490, Universit\'e Paris-Sud\\[-0.4ex]
\small 91405 Orsay Cedex, France\\[-0.4ex]
\small \texttt{forge@lri.fr}
}\\[10pt]

{\large
V\'eronique Ventos \\
\small INRIA Futurs-Projet GEMO,\\[-0.4ex] 
\small Parc Club Orsay Universit\'e\\[-0.4ex] 
\small 4, rue Jacques Monod -- B\^at.\ G \\[-0.4ex]
\small 91405 Orsay Cedex, France \\[-0.4ex]
\small \texttt{ventos@lri.fr}
}\\[10pt]

{\large
Thomas Zaslavsky\footnote{Much of Zaslavsky's work was performed while visiting the LRI, Universit\'e Paris-Sud.} \\
\small Department of Mathematical Sciences \\[-0.4ex]
\small Binghamton University (SUNY)\\[-0.4ex]
\small Binghamton, NY 13902-6000, U.S.A.\\[-0.4ex]
\small \texttt{zaslav@math.binghamton.edu}
}\\[10pt]
}
\end{center}

\newpage

\begin{abstract}
A gain graph is a graph whose edges are labelled invertibly by \emph{gains} from a group.  \emph{Switching} is a transformation of gain graphs that generalizes conjugation in a group.  
A \emph{weak chromatic function} of gain graphs with gains in a fixed group satisfies three laws: deletion-contraction for links with neutral gain, invariance under switching, and nullity on graphs with a neutral loop.  
The laws are analogous to those of the chromatic polynomial of an ordinary graph, though they are different from those usually assumed of gain graphs or matroids.  The three laws lead to the \emph{weak chromatic group} of gain graphs, which is the universal domain for weak chromatic functions.  
We find expressions, valid in that group, for a gain graph in terms of minors without neutral-gain edges, or with added complete neutral-gain subgraphs, that generalize the expression of an ordinary chromatic polynomial in terms of monomials or falling factorials.  
These expressions imply relations for all switching-invariant functions of gain graphs, such as chromatic polynomials, that satisfy the deletion-contraction identity for neutral links and are zero on graphs with neutral loops.
Examples are the total chromatic polynomial of any gain graph, including its specialization the zero-free chromatic polynomial, and the integral and modular chromatic functions of an integral gain graph.  

We apply our relations to some special integral gain graphs including those that correspond to the Shi, Linial, and Catalan arrangements, thereby obtaining new evaluations of and new ways to calculate the zero-free chromatic polynomial and the integral and modular chromatic functions of these gain graphs, hence the characteristic polynomials and hypercubical lattice-point counting functions of the arrangements.  The proof involves gain graphs between the Catalan and Shi graphs whose polynomials are expressed in terms of descending-path vertex partitions of the graph of $(-1)$-gain edges.

We also calculate the total chromatic polynomial of any gain graph and especially of the Catalan, Shi, and Linial gain graphs.
\end{abstract}

\section{Introduction}

To calculate the chromatic polynomial $\chi_\G(q)$ of a simple graph there is a standard method that comes in two forms.  One can delete and contract edges, repeatedly applying the identity $\chi_\G = \chi_{\G\setm e} - \chi_{\G/e}$ and the reduction $\chi_\G(q)=0$ if $\G$ has a loop, to reduce the number of edges to zero.  One ends up with a weighted sum of chromatic polynomials of edgeless graphs, i.e., of monomials $q^k$.  Or, one can add missing edges using the opposite identity, $\chi_\G = \chi_{\G \cup e} + \chi_{\G/e}$, until $\chi_\G$ becomes a sum of polynomials of complete graphs, i.e., of falling factorials $(q)_k$.  We extend these approaches to gain graphs, where the edges are orientably labelled by elements of a group.  The resulting formulas are slightly more complex than those for simple graphs, but they can be used to compute examples; we show this with gain graphs related to the Shi, Linial, and Catalan hyperplane arrangements.

In a \emph{gain graph}, having edges labelled orientably means that reversing the direction of an edge inverts the label (the \emph{gain} of the edge).  Gain graphs, like ordinary graphs, have chromatic polynomials, which for various choices of gain group give the characteristic polynomials of interesting arrangements of hyperplanes.  In particular, when the gain group is $\bbZ$, the additive group of integers, the edges correspond to \emph{integral affinographic hyperplanes}, that is, hyperplanes of the form $x_j=x_i+m$ for integers $m$.  Arrangements (finite sets) of hyperplanes of this type, which include the Shi arrangement, the Linial arrangement, and the Catalan arrangement, have gained much interest in recent years.  The fact that the chromatic polynomials of gain graphs satisfy the classic deletion-contraction reduction formula has important consequences, e.g., a closed-form formula, and also a method of computation that can significantly simplify computing the polynomials of the Shi, Linial, and Catalan arrangements.

The deletion-contraction relation for functions can be viewed abstractly at the level of a Tutte group, which means that we take the free abelian group $\bbZ\cG(\fG)$ generated by all gain graphs with fixed gain group $\fG$, and from deletion-contraction we infer algebraic relations satisfied in a quotient of $\bbZ\cG(\fG)$.  This gives a group we call the \emph{neutral chromatic group} of gain graphs; it is a special type of Tutte group.  In particular, the relations reduce the number of generators.  These relations on graphs then, by functional duality, automatically generate the original deletion-contraction relations on chromatic polynomials and other functions of a similar type that we call \emph{weak chromatic functions} or, if they are invariant under switching (defined in the next section), simplification, and isomorphism, \emph{weak chromatic invariants}.

Our investigation in \cite{SOA} of the number of integer lattice points in a hypercube that avoid all the hyperplanes of an affinographic hyperplane arrangement led us to functions of integral gain graphs, the integral and modular chromatic functions, that count proper colorations of the gain graph from the color set $\{1,\ldots,q\}$, treated as either integers or modulo $q$.  
Like the chromatic polynomial, they satisfy a deletion-contraction identity, but (in the case of the integral chromatic function) only for links with neutral gain.  
That fact is part of what suggested our approach, and evaluating these chromatic functions for the Catalan, Shi, and Linial arrangements is part of our main results.

A brief outline:  The first half of the paper develops the general theory of functions on gain graphs that satisfy deletion-contraction for neutral links and are zero on gain graphs with neutral loops, in terms of  the neutral chromatic group.  It also develops, first of all, the corresponding theory for ordinary graphs, since that is one way we prove the gain-graphic reduction formulas.  The second half applies the theory to the computation of chromatic functions of the Catalan, Shi, and Linial gain graphs and a family of graphs intermediate between the Catalan and Shi graphs.  The latter can be computed in terms of partitions into descending paths of the vertex set of a graph.  This half also shows how to compute the total chromatic polynomial in terms of the zero-free chromatic polynomial; this in particular gives the chromatic polynomials of the Catalan, Shi, and Linial (and intermediate) graphs, although since the results are not as interesting as the method we do not state them.

\section{Basic definitions}\mylabel{defs}

For a nonnegative integer $k$, $[k]$ denotes the set $\{1,2,\dots,k\}$, the empty set if $k=0$.  The set of all partitions of $[n]$ is $\Pi_n$.  The falling factorial is $(x)_m=x(x-1)\cdots(x-m+1)$.

Suppose $\cP(E)$ is the power set of a finite set $E$, $S \mapsto \bar S$ is a closure operator on $E$, $\cF$ is the class of closed sets, and $\mu$ is the M\"obius function of $\cF$.  If the empty set is not closed, then $\mu(\eset,A)$ is defined to be $0$ for all $A \in \cF$.  It is known that:

\begin{lem}\mylabel{L:mu}
For each closed set $A$, $\mu(\eset,A) = \sum_{S} (-1)^{|S|}$, summed over edge sets $S$ whose closure is $A$.
\end{lem}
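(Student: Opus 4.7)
The plan is to set $f(A) := \sum_{S \ind \bar S = A} (-1)^{|S|}$ for each $A \in \cF$ and prove $f(A) = \mu(\eset,A)$ by Möbius inversion on $\cF$. The key observation is that aggregating $f$ over closed sets below a fixed closed $A$ gives a sum indexed by \emph{all} subsets of $A$:
\[
\sum_{B \in \cF,\, B \subseteq A} f(B) \;=\; \sum_{S \ind \bar S \subseteq A} (-1)^{|S|} \;=\; \sum_{S \subseteq A} (-1)^{|S|},
\]
where the last equality uses that $A$ is closed, so $\bar S \subseteq A$ iff $S \subseteq A$. By the binomial theorem this equals $1$ if $A = \eset$ and $0$ otherwise.

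I would then split into two cases according to whether $\eset \in \cF$. If $\eset$ is closed, then $\eset$ is the unique minimum $\hat 0$ of $\cF$, and the identity just derived reads $\sum_{B \le A} f(B) = \delta_{A,\hat 0}$. Möbius inversion in $\cF$ yields $f(A) = \mu(\hat 0, A) = \mu(\eset,A)$, as desired.

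If $\eset$ is not closed, then $\bar\eset \ne \eset$, so every $A \in \cF$ is nonempty and the right-hand side above is $0$ for every $A \in \cF$. A simple induction up the poset $\cF$ starting from $\bar\eset$ forces $f(A) = 0$ for all $A \in \cF$, which agrees with the stated convention $\mu(\eset,A) = 0$.

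The only real subtlety, and thus the main thing to handle carefully, is the case where the empty set is not closed: one must not invoke Möbius inversion with $\hat 0 = \eset$, and one must verify that the combinatorial quantity $f(A)$ matches the convention $\mu(\eset,A) = 0$ rather than some nonzero value coming from $\bar\eset$. Everything else is a routine application of the identity $\sum_{S\subseteq A}(-1)^{|S|} = [A = \eset]$ together with the defining recursion of the Möbius function.
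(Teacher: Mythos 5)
Your proof is correct. For the case $\eset \in \cF$ you have in effect re-derived the content of the result the paper simply cites (Aigner, Prop.\ 4.29): both reduce to the observation that $A \mapsto \sum_{\bar S = A}(-1)^{|S|}$ and $A \mapsto \mu(\hat 0, A)$ satisfy the same recursion $\sum_{B\le A} f(B) = \delta_{A,\hat 0}$ on $\cF$, where the key step is that $\bar S \subseteq A$ iff $S \subseteq A$ for closed $A$. Where you genuinely diverge from the paper is the case $\eset \notin \cF$. The paper handles it by a direct factorization: decomposing $S$ as $(S\cap\bar\eset)\cupdot(S\setm\bar\eset)$, noting that the closure of $S$ depends only on $S\setm\bar\eset$, and pulling out the factor $\sum_{T\subseteq\bar\eset}(-1)^{|T|}=0$. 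You instead observe that $\sum_{B\le A}f(B)=\sum_{S\subseteq A}(-1)^{|S|}=0$ for every $A\in\cF$ (since every closed set contains $\bar\eset\ne\eset$), and then induct up the poset from its minimum $\bar\eset$ to conclude $f\equiv 0$, matching the convention. Your approach has the merit of treating both cases through the single identity $\sum_{B\le A}f(B)=[A=\eset]$; the paper's factorization is more local and avoids any poset induction, but it requires spotting that closure is insensitive to the part of $S$ inside $\bar\eset$. Both are sound, and the computations you outline (the binomial vanishing, the closed-set equivalence, the induction) all check out.
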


\begin{proof}
If $\eset$ is closed, this is \cite[Prop.\ 4.29]{Aigner}.  
Otherwise, $\sum_{\bar S=A} (-1)^{|S|} = \sum_{S\subseteq\bar\eset} (-1)^{|S|} x = 0x = 0$, where $x$ is a sum over some subsets of $E \setm \bar\eset$.
\end{proof}

Our usual name for a graph is $\G=(V,E)$.  Its vertex set is $V = \{v_1,v_2,\ldots,v_n\}$.  All our graphs are finite.  Edges in a graph are of two kinds: a \emph{link} has two distinct endpoints; a \emph{loop} has two coinciding endpoints.  Multiple edges are permitted.  
Edges that have the same endpoints are called \emph{parallel}.  
The \emph{simplification} of a graph is obtained by removing all but one of each set of parallel edges, including parallel loops.  (This differs from the usual definition, in which loops are deleted also.)    
The complement of a graph $\G$ is written $\G^c$; this is the simple graph whose adjacencies are complementary to those of $\G$.  
The complete graph with $W$ as its vertex set is $K_W$.  
For a partition $\pi$ of $V$, $K_\pi$ denotes a complete graph whose vertex set is $\pi$.  

If $S \subseteq E$, we denote by $c(S)$ the number of connected components of the spanning subgraph $(V,S)$ (which we usually simply call the ``components of $S$'') and by $\pi(S)$ the partition of $V$ into the vertex sets of the various components.  The complement of $S$ is $S^c = E \setm S$.  

An edge set $S$ in $\G$ is \emph{closed} if every edge whose endpoints are joined by a path in $S$ is itself in $S$.  $\cF(\G)$ denotes the lattice of closed sets of $\G$.   

Contracting $\G$ by a partition $\pi$ of $V$ means replacing the vertex set by $\pi$ and changing the endpoints $v, w$ of an edge $e$ to the blocks $B_v, B_w \in \pi$ that contain $v$ and $w$ (they may be equal); we write $\G/\pi$ for the resulting graph.

A vertex set is \emph{stable} if no edge has both endpoints in it.  
A \emph{stable partition} of $\G$ is a partition of $V$ into stable sets; let $\Pi^*(\G)$ be the set of all such partitions.  
Contracting a graph by a stable partition $\pi$ means collapsing each block of $\pi$ to a vertex and then simplifying parallel edges; there will be no loops. 

\medskip

A \emph{gain graph} $\Phi = (\G,\phi)$ consists of an underlying graph $\G$ and a function $\phi: E \to \fG$ (where $\fG$ is a group), which is orientable, so that if $e$ denotes an edge oriented in one direction and $e\inv$ the same edge with the opposite orientation, then $\phi(e\inv) = \phi(e)\inv$.  The group $\fG$ is called the \emph{gain group} and $\phi$ is called the \emph{gain mapping}.  A \emph{neutral edge} is an edge whose gain is the neutral element of the group, that is, $1_\fG$, or for additive groups $0$.  
The \emph{neutral subgraph} of $\Phi$ is the subgraph $\G_0 := (V,E_0)$ where $E_0$ is the set of neutral edges.  

We sometimes use the simplified notations $e_{ij}$ for an edge with endpoints $v_i$ and $v_j$, oriented from $v_i$ to $v_j$, and $ge_{ij}$ for such an edge with gain $g$; that is, $\phi(ge_{ij})=g$.  (Thus $ge_{ij}$ is the same edge as $g\inv e_{ji}$.)

A second way to describe a gain graph, equivalent to the definition, is as an ordinary graph $\G$, having a set of gains for each oriented edge $e_{ij}$, in such a way that the gains of $e_{ji}$ are the inverses of those of $e_{ij}$.  For instance, $K_n$ with additive gains $1,-1$ on every edge is a gain graph that has edges $1e_{ij}$, and $-1e_{ij}$ for every $i \neq j$.  Since this is a gain graph, $1e_{ij}$ and $-1e_{ji}$ are the same edge.

Two gain graphs are \emph{isomorphic} if there is a graph isomorphism between them that preserves gains.  

The \emph{simplification} of a gain graph is obtained by removing all but one of each set of parallel edges, including parallel loops, that have the same gain.  (Unlike the usual definition, we do not mean to remove all loops.)  

A \emph{circle} is a connected 2-regular subgraph, or its edge set.  The gain of $C=e_1e_2\cdots e_l$ is $\phi(C):=\phi(e_1)\phi(e_2)\cdots\phi(e_l)$; this is not entirely well defined, but it is well defined whether the gain is or is not the neutral element of $\fG$.  An edge set or subgraph is called \emph{balanced} if every circle in it has neutral gain.

\emph{Switching} $\Phi$ by a \emph{switching function} $\eta : V \to \fG$ means replacing $\phi$ by $\phi^\eta$ defined by 
\[
\phi^\eta(e_{ij}):=\eta_i\inv\phi(e_{ij})\eta_j,
\]  
where $\eta_i := \eta(v_i)$.  
We write $\Phi^\eta$ for the switched gain graph $(\G,\phi^\eta)$.  Switching does not change balance of any subgraph.  

The operation of deleting an edge or a set of edges is obvious, and so is contraction of a neutral edge set $S$:  we identify each block $W\in \pi(S)$ to a single vertex and delete $S$ while retaining the gains of the remaining edges.  (This is just as with ordinary graphs.)  The contraction is written $\Phi/S$.  
A \emph{neutral-edge minor} of $\Phi$ is any graph obtained by deleting and contracting neutral edges; in particular, $\Phi$ is a neutral-edge minor of itself.

Contraction of a general balanced edge set is not so obvious.  Let $S$ be such a set.  There is a switching function $\eta$ such that $\phi^\eta\big|_S=1_\fG$ \cite[Section I.5]{BG}; and $\eta$ is determined by one value in each component of $S$.  If the endpoints of $e$ are joined by a path $P$ in $S$, $\phi^\eta(e)$ is well defined as the value $\phi(P\cup e)$, the gain of the circle formed by $P$ and $e$.  Now, to contract $S$ we first switch so that $S$ has all neutral gains; then we contract $S$ as a neutral edge set.  This contraction is also written $\Phi/S$.  
It is well defined only up to switching because of the arbitrary choice of $\eta$.  However, when contracting a neutral edge set we always choose $\eta \equiv 1_\fG$; then $\Phi/S$ is completely well defined.  

An \emph{edge minor} of $\Phi$ is a graph obtained by any sequence of deletions and contractions of any edges; thus, for instance, $\Phi$ is an edge minor of itself.

\emph{Contracting $\Phi$ by a partition} $\pi$ of $V$ means identifying each block of $\pi$ to a single vertex without changing the gain of any edge.  The notation for this contraction is $\Phi/\pi$.  (A contraction of $\Phi$ by a partition is not an edge minor.)

\medskip

If $W\subseteq V$, the subgraph induced by $W$ is written $\G\ind W$ or $\Phi\ind W$.  
For $S \subseteq E$, $S\ind W$ means the subset of $S$ induced by $W$.

\section{Chromatic relations on graphs}

We begin with weak chromatic functions of ordinary graphs.  This introduces the ideas in the relatively simple context of graphs; we also use the graph case in proofs.  
The models are the chromatic polynomial, $\chi_\G(q)$, and its normalized derivative the beta invariant $\beta(\G) := (-1)^n (d/dq)\chi_\G(1)$.  Regarded as a function $F$ of graphs, a weak chromatic function has three fundamental properties:  the \emph{deletion-contraction} law,
\begin{align*}
F(\G) &= F(\G\setm e) - F(\G/e) &&\text{ for every link $e$};
\intertext{\emph{loop nullity},}
F(\G) &= 0 &&\text{ if $\G$ has a loop};
\intertext{and \emph{invariance under simplification},}
F(\G') &= F(\G) &&\text{ if $\G'$ is obtained from $\G$ by simplification}.
\end{align*}
Two usually important properties of which we have no need are isomorphism invariance (with the obvious definition) and multiplicativity (the value of $F$ equals the product of its values on components; the chromatic polynomial is multiplicative but the beta invariant is not).

Let $\cG$ be the class of all graphs.  A function $F$ from $\cG$ to an abelian group is a \emph{weak chromatic function} if it satisfies deletion-contraction, invariance under simplification, and loop nullity.  
(Chromatic functions are a special type of Tutte function, as defined in \cite{STF}.  The lack of multiplicativity is why our functions are ``weak''; cf.\ \cite{STF}.)  
One can take a function $F$ with smaller domain, in particular, the set $\cM(\G_0)$ of all \emph{edge minors} of a fixed graph $\G_0$ (these are the graphs obtained from $\G_0$ by deleting and contracting edges), although then Lemma \ref{simplifygraph} is less strong.

The first result shows that loop nullity is what particularly distinguishes chromatic functions from other functions that satisfy the deletion-contraction property, such as the number of spanning trees.  

\begin{lem}\mylabel{simplifygraph}
Given a function $F$ from all graphs to an abelian group that satisfies deletion-contraction for all links, loop nullity is equivalent to invariance under simplification.

Let $\G_0$ be any graph.  Given a function $F$ from $\cM(\G_0)$ to an abelian group that satisfies deletion-contraction for all links, loop nullity implies invariance under simplification.
\end{lem}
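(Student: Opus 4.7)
I would prove the two implications separately. For the forward direction (loop nullity implies invariance under simplification), note that simplification deletes one edge from each set of parallel edges, so by iteration it suffices to prove $F(\G) = F(\G\setm e')$ whenever $e,e'$ are parallel edges of $\G$. If $e,e'$ are parallel loops, then both $\G$ and $\G\setm e'$ still carry a loop, so both sides vanish by loop nullity. If $e,e'$ are parallel links, then applying deletion-contraction at $e'$ gives $F(\G) = F(\G\setm e') - F(\G/e')$; but in $\G/e'$ the two endpoints of $e$ are identified, so $e$ becomes a loop in $\G/e'$, whence $F(\G/e')=0$ by loop nullity.

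For the converse (invariance under simplification implies loop nullity), suppose $\G$ has a loop $\ell$ at a vertex $v$. The plan is to build an auxiliary graph on which the two hypotheses yield two different evaluations of $F$, whose comparison forces $F(\G) = 0$. Concretely, adjoin to $\G$ a new vertex $u$ and two parallel links $e_1, e_2$ joining $u$ to $v$; call this graph $\tilde\G$, and set $\G^+ := \tilde\G\setm e_2$. On the one hand, $e_1$ and $e_2$ are parallel in $\tilde\G$, so simplification deletes one of them and invariance gives $F(\tilde\G) = F(\G^+)$. On the other hand, deletion-contraction at the link $e_2$ reads $F(\tilde\G) = F(\G^+) - F(\tilde\G/e_2)$; contraction identifies $u$ with $v$, so $e_1$ becomes a loop at $v$ parallel to $\ell$, and a further application of invariance (removing the parallel loop) identifies $\tilde\G/e_2$ in value with $\G$, giving $F(\tilde\G/e_2) = F(\G)$. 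Equating the two expressions for $F(\tilde\G)$ yields $F(\G) = 0$.

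For the second statement, only the forward argument is needed, and it transfers verbatim to $\cM(\G_0)$ because it uses only deletions and contractions of edges already present, which keep us inside $\cM(\G_0)$. The main obstacle is conceptual rather than computational: finding the right auxiliary graph $\tilde\G$ so that one simplification step matches one deletion-contraction step and thereby forces the loop $\ell$ into a relation with itself is the creative move. This same construction is also the reason the converse fails for the restricted domain: passing to $\tilde\G$ requires \emph{adding} a vertex and edges to $\G$, which in general takes one outside $\cM(\G_0)$, so the converse cannot be asserted there.
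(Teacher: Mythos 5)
Your proposal is correct. The forward direction is essentially the paper's argument: contract one edge of a parallel pair, the other becomes a loop, and deletion–contraction plus loop nullity collapses the identity to $F(\G) = F(\G\setm e)$. You make the parallel-loops subcase explicit (trivially both sides vanish), which the paper leaves tacit; this is a small but genuine completeness improvement.

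Your converse takes a genuinely different route. The paper views the loop graph $\G'$ directly as the contraction $\G/e$ of a graph $\G$ carrying a digon $\{e,f\}$ (obtained by ``uncontracting'' the loop $f$ into a link to a new vertex and adding a parallel companion $e$), then runs the forward computation in reverse: invariance gives $F(\G) = F(\G\setm e)$, deletion–contraction gives $F(\G)=F(\G\setm e)-F(\G/e)$, hence $F(\G')=F(\G/e)=0$. You instead attach a pendant digon $\{e_1,e_2\}$ to a fresh vertex $u$, compare the invariance and deletion–contraction evaluations of $F(\tilde\G)$ to deduce $F(\tilde\G/e_2)=0$, and then peel off the extra loop created by the contraction to conclude $F(\G)=0$. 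Both constructions work; the paper's is one simplification step shorter, while yours is structurally parallel to your forward argument (a digon, a contraction, a loop). Both necessarily enlarge the graph, which is exactly why neither argument transfers to the restricted domain $\cM(\G_0)$—you correctly identify this as the obstruction, matching the paper's own remark that the converse ``applies if the domain of $F$ contains a graph $\G$ with the requisite digon.''
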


\begin{proof}
Assume $F$ has loop nullity.  Suppose we contract an edge in a digon $\{e,f\}$.  The other edge becomes a loop.  By deletion-contraction and loop nullity, 
$F(\G) = F(\G\setm e) - F(\G/e) = F(\G\setm e) + 0.$

Conversely, assume $F$ is invariant under simplification.  If $\G'$ has a loop $f$, it is the contraction of a graph $\G$ with a digon $\{e,f\}$ and the preceding reasoning works in reverse.  This reasoning applies if the domain of $F$ contains a graph $\G$ with the requisite digon; that is certainly true if the domain is $\cG$.
\end{proof}

In other words, one may omit invariance under simplification from the definition of a weak chromatic function.  This is good to know because for some functions it is easier to establish loop nullity than invariance under simplification.

A weak chromatic function can be treated as a function of vertex-labelled graphs.  
(A \emph{vertex-labelled (simple) graph} has a vertex set and a list of adjacent vertices.)
To see this we need only observe that graphs with loops can be ignored, since they have value $F(\G)=0$ by definition, and for graphs without loops, the set of vertices and the list of adjacent pairs determines $F(\G)$.  To prove the latter, consider two simple graphs, $\G_1$ and $\G_2$, with the same vertex set and adjacencies.  Let $\G$ be their union (on the same vertex set).  Both $\G_1$ and $\G_2$ are simplifications of $\G$, so all three have the same value of $F$.  
When we regard $F$ as defined on vertex-labelled graphs, contraction is \emph{simplified contraction}; that is, parallel edges in a contraction $\G/e$ are automatically simplified.  

Now we introduce the algebraic formalism of weak chromatic functions.  
The \emph{chromatic group} for graphs, $\CT$, is the free abelian group $\bbZ\cG$ generated by all graphs, modulo the relations implied by deletion-contraction, invariance under simplification, and loop nullity.  These relations are: 
\begin{equation}\mylabel{E:graphdcnl}
\begin{aligned}
\G &= (\G \setm e) - (\G/e) &\text{ for a link $e$}, \\
\G' &= \G &\text{ if $\G'$ is obtained from $\G$ by simplification}, \\
\G &= 0 &\text{ if $\G$ has a loop}.
\end{aligned}
\end{equation}
The point of these relations is that any homomorphism from the chromatic group to an abelian group will be a weak chromatic function of graphs, and any such function of graphs that has values in an abelian group $\fA$ is the restriction to $\cG$ of a (unique) homomorphism from $\CT$ to $\fA$.  These facts follow automatically from the definition of the chromatic group.  
Thus, $\CT$ is the universal abelian group for weak chromatic functions of graphs.  

It follows from the definition (the proof is similar to the preceding one for functions) that two graphs are equal in $\CT$ if they simplify to the same vertex-labelled graph; that is, we may treat $\CT$ as if it were generated by vertex-labelled graphs and contraction as simplified contraction.

We may replace $\cG$ by $\cM(\G)$; $\CT(\G)$ denotes the corresponding chromatic group, i.e., $\bbZ\cM(\G)$ modulo the relations \eqref{E:graphdcnl}.  Then $\CT(\G)$ is the universal abelian group for weak chromatic functions defined on the edge minors of $\G$.

Now we present the main result about graphs.  Let $\cF(\G_0)$ be the lattice of closed edge sets of the ordinary graph $\G_0$, and write $\mu$ for its M\"obius function.  If the empty set is not closed (that is, if there are loops in $\G_0$), then $\mu(\eset,S)$ is defined to be identically $0$.  

\begin{lem}\mylabel{graph}
In the chromatic group $\CT(\G_0)$ of a graph $\G_0$, for any edge minor\/ $\G$ of\/ $\G_0$ we have the relations
\[
\G = \sum_{S\in \cF(\G)}  \mu(\eset,S) [(\G/S) \setm S^c] = \sum_{S\subseteq E}  (-1)^{|S|} [(\G/S) \setm S^c]
\]
and
\[
\G = \sum_{\pi\in\Pi^*(\G)} K_\pi .\]
\end{lem}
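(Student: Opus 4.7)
The plan is to prove the two displayed identities in turn. For the subset-form expansion $\G = \sum_{S\subseteq E}(-1)^{|S|}[(\G/S)\setm S^c]$ I would induct on $|E(\G)|$. The base $E=\eset$ is immediate. In the inductive step, if $\G$ has a loop $e$, then $\G=0$ in $\CT(\G_0)$ by loop nullity; on the right-hand side I pair each $S\not\ni e$ with $S\cup\{e\}$. Since $e$ is a loop, $\pi(S)=\pi(S\cup\{e\})$, so the two terms produce the same edgeless graph on the same vertex set but with opposite signs, and everything cancels. If instead $e$ is a link, I split the sum by whether $e\in S$. Subsets with $e\notin S$ biject with subsets of $E(\G\setm e)$, and $(\G/S)\setm S^c$ is unchanged whether computed in $\G$ or in $\G\setm e$, so by induction this piece contributes $\G\setm e$. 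Subsets containing $e$ factor as $S=S'\cup\{e\}$ with $S'\subseteq E(\G/e)$ and $(\G/S)\setm S^c = ((\G/e)/S')\setm(S')^c$, contributing $-(\G/e)$ by induction. Combining yields $\G\setm e - \G/e = \G$ via deletion-contraction.

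For the M\"obius form, observe that $(\G/S)\setm S^c$ is an edgeless graph whose vertex set is $\pi(S)$, and that $\pi(S)=\pi(\bar S)$ because passing to the matroid closure does not alter connectivity. Hence $(\G/S)\setm S^c = (\G/\bar S)\setm \bar S^c$, so I group the subset sum by closure: for fixed $A\in\cF(\G)$, the inner sum $\sum_{\bar S=A}(-1)^{|S|}$ equals $\mu(\eset,A)$ by Lemma~\ref{L:mu}, converting the subset sum into the claimed sum over closed sets.

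For the stable-partition identity $\G = \sum_{\pi\in\Pi^*(\G)} K_\pi$ I would induct on the number of non-adjacent vertex pairs of $\G$. If there are none, $\G$ simplifies to $K_V$, the only stable partition is the discrete partition $\hat 0$, and $K_{\hat 0}=K_V=\G$. Otherwise pick a non-adjacent pair $\{u,v\}$, let $f$ be an edge joining them, and apply deletion-contraction to the link $f$ in $\G\cup f$, rearranged as $\G = (\G\cup f) + (\G\cup f)/f$. Both summands have strictly fewer non-adjacent pairs, so induction applies. Stable partitions of $\G\cup f$ are precisely the stable partitions of $\G$ separating $u$ from $v$, whereas stable partitions of $(\G\cup f)/f$ biject with stable partitions of $\G$ placing $u$ and $v$ in a common block; these two collections partition $\Pi^*(\G)$, and the corresponding $K_\pi$'s match under the bijection, so the two inductive sums assemble into the required total.

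The main obstacle I foresee is bookkeeping rather than a deep idea. In the subset identity one must use invariance under simplification to justify that all edges of $S^c$ disappear from $(\G/S)\setm S^c$ regardless of whether contraction creates loops or parallel edges, and that this graph truly depends only on the closure $\bar S$. In the stable-partition identity the delicate point is that $K_\pi$, for a stable partition $\pi$ of $\G$ with $u$ and $v$ in a common block, is identified in $\CT(\G_0)$ with $K_{\pi''}$ for the corresponding partition $\pi''$ of $\G/\{u,v\}$; this is precisely what the simplification-invariance built into $\CT$ ensures once one regards its generators as vertex-labelled simple graphs.
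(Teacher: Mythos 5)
Your proof is correct and follows essentially the same path as the paper's: for the subset identity, the same edge-count induction with deletion-contraction at a link and pairwise cancellation at a loop (the paper runs the expansion from $\G = (\G\setm e) - (\G/e)$ outward while you collapse the sum inward, but these are the same induction); for the M\"obius form, the same grouping of subsets by closure followed by Lemma~\ref{L:mu}; and for the stable-partition identity, the same induction on non-adjacent pairs via $\G = (\G\cup f) + (\G\cup f)/f$ and the same bijection between stable partitions split by whether $u,v$ share a block.
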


\begin{proof}
The two sums in the first identity are equal by Lemma \ref{L:mu} and because $(\G/S) \setm E = (\G/R) \setm E$.  

We prove the first identity for graphs $\G$ without loops by induction on the number of edges in $\G$.  Let $e$ be a link in $\G$.
\begin{align*}
\G &= (\G\setm e) - (\G/e) \\
 &= \sum_{S \subseteq E\setm e}  (-1)^{|S|} [(\G/S) \setm S^c] - \sum_{A \subseteq E\setm e}  (-1)^{|A|} [([\G/e]/A) \setm A^c] \\
 &= \sum_{e \notin S \subseteq E}  (-1)^{|S|} [(\G/S) \setm S^c] - \sum_{e \in S \subseteq E}  (-1)^{|S|-1} [(\G/S) \setm S^c] \\
 &= \sum_{S\subseteq E}  (-1)^{|S|} [(\G/S) \setm S^c] ,
\end{align*}
where in the middle step we replaced $A \subseteq E\setm e$ by $S = A \cup e$.

If $\G$ has loops, let $e$ be a loop.  Then $\G=0$; at the same time 
\begin{align*}
\sum_{S\subseteq E}  (-1)^{|S|} [(\G/S) \setm S^c] 
&= \sum_{e \in S\subseteq E}  (-1)^{|S|} [(\G/S) \setm S^c] + \sum_{e \notin S\subseteq E}  (-1)^{|S|} [(\G/S) \setm S^c] \\
&= \sum_{e \notin S\subseteq E}  \Big[ (-1)^{|S|+1} [(\G/S/e) \setm (S\setm e)^c] +  (-1)^{|S|} [(\G/S) \setm S^c] \Big] ,
\end{align*}
which equals $0$ because for a loop, $\G/S/e = \G/S \setm e$.

We prove the second identity by induction on the number of edges not in $\G$.  Each stable partition of $\G \setm e$ is either a stable partition of $\G$, or has a block that contains the endpoints of $e$.  In that case, contracting $e$ gives a stable partition of $\G/e$.  Thus,
\[
\G \setm e = \G + (\G/e) = \sum_{\pi\in\Pi^*(\G)} K_\pi + \sum_{\pi\in\Pi^*(\G/e)} K_\pi = \sum_{\pi\in\Pi^*(\G \setm e)} K_\pi .
\qquad\qedhere
\]
\end{proof}

Validity of the identities in $\CT(\G_0)$ implies they are valid in $\CT$, since the former maps homomorphically into the latter by linear extension of the natural embedding $\cM(\G_0) \to \cG$.  
(We cannot say that this homomorphism of chromatic groups is injective.  The relations in $\CT$ might conceivably imply relations amongst the minors of $\G_0$ that are not implied by the defining relations of $\CT(\G_0)$.  We leave the question of injectivity open since it is not relevant to our work.)

\section{Neutral chromatic functions and relations on gain graphs}

We are interested in functions on gain graphs, with values in some fixed abelian group, that satisfy close analogs of the chromatic laws for graphs, which in view of Lemma \ref{simplifygraph} are two: deletion-contraction and loop nullity.  
The \emph{neutral deletion-contraction} relation is the deletion-contraction identity
\begin{equation}\mylabel{E:dc}
F(\Phi) = F(\Phi\setm e) - F(\Phi/e)
\end{equation}
for the special case where $e$ is a neutral link.  \emph{Neutral-loop nullity} is the identity
\[
F(\Phi) = 0 \text{ if $\Phi$ has a neutral loop}.
\]
Neutral deletion-contraction is a limited version of a property that in the literature is usually required (if at all) of all or nearly all links.  
We call a function that adheres to these two properties a \emph{weak neutral chromatic function} of gain graphs; ``weak'' because it need not be multiplicative, ``neutral'' because only neutral edges must obey the two laws.  
(To readers familiar with the half and loose edges of \cite{BG}: a loose edge is treated like a neutral loop.)

A function is \emph{invariant under neutral-edge simplification} if
\begin{enumerate}
\item[] $F(\Phi) = F(\Phi')$ when $\Phi'$ is obtained from $\Phi$ by removing one edge of a neutral digon.
\end{enumerate}

\begin{lem}\mylabel{simplifyneutral}
Given a function $F$ of all gain graphs with a fixed gain group that satisfies deletion-contraction for all neutral links, neutral-loop nullity is equivalent to invariance under neutral-edge simplification.
\end{lem}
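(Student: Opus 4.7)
The plan is to mirror the two-direction argument of Lemma \ref{simplifygraph}, adapted to the gain-graph setting. Since the hypotheses only guarantee deletion-contraction for \emph{neutral} links, both implications will be carried out using neutral digons---pairs of parallel neutral links---playing the role of digons in the graph case.

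For the forward direction (neutral-loop nullity implies invariance under neutral-edge simplification), suppose $\{e,f\}$ is a neutral digon in $\Phi$. Apply deletion-contraction at the neutral link $e$:
\[
F(\Phi) = F(\Phi \setm e) - F(\Phi / e).
\]
Contracting $e$ identifies its two endpoints, so the parallel neutral link $f$ becomes a neutral loop in $\Phi/e$. Hence $F(\Phi/e) = 0$ by neutral-loop nullity, giving $F(\Phi) = F(\Phi \setm e)$, which is invariance under neutral-edge simplification.

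For the converse, assume invariance under neutral-edge simplification, and let $\Phi'$ be any gain graph containing a neutral loop $f$ at a vertex $v$. I will build a gain graph $\Phi$ for which $\Phi / e = \Phi'$, where $\{e,f\}$ is a neutral digon in $\Phi$. Explicitly, split $v$ into two new vertices $v_1,v_2$; promote $f$ to a neutral link joining $v_1$ and $v_2$; add a second neutral link $e$ between $v_1$ and $v_2$; and reassign every edge of $\Phi'$ that was incident to $v$ (other than $f$) to $v_1$, keeping all remaining edges and gains unchanged. Then $\Phi/e = \Phi'$, and $\{e,f\}$ is a neutral digon in $\Phi$. Deletion-contraction yields $F(\Phi) = F(\Phi \setm e) - F(\Phi/e)$, while invariance under neutral-edge simplification gives $F(\Phi \setm e) = F(\Phi)$. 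Therefore $F(\Phi') = F(\Phi/e) = 0$, which is neutral-loop nullity.

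There is no serious obstacle; the one point worth noting---already visible in Lemma \ref{simplifygraph}---is that the converse direction needs the auxiliary graph $\Phi$ to lie in the domain of $F$. This is precisely why the lemma is stated for the class of \emph{all} gain graphs with a fixed gain group, rather than for the neutral-edge minors of a single fixed gain graph.
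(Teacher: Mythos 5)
Your proof is correct and takes essentially the same approach as the paper, which simply says the argument is like that of Lemma \ref{simplifygraph} and omits it; you carry out exactly that adaptation, replacing digons by neutral digons and loops by neutral loops, and correctly flag the domain requirement that makes the converse direction work.
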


The proof is like that of Lemma \ref{simplifygraph} so we omit it.

The \emph{neutral chromatic group} for $\fG$-gain graphs, $\CTN(\fG)$, is the free abelian group $\bbZ\cG(\fG)$ generated by the class $\cG(\fG)$ of all gain graphs with the gain group $\fG$, modulo the relations implied by deletion-contraction of neutral links and neutral-loop nullity.  These relations are
\begin{equation}\mylabel{E:dcnl}
\begin{aligned}
\Phi &= (\Phi\setm e) - (\Phi/e) \text{ for a neutral link $e$}, \\
\Phi &= 0 \text{ if $\Phi$ has a neutral loop}.
\end{aligned}
\end{equation}
As with graphs, the purpose of these relations is that any homomorphism from the neutral chromatic group to an abelian group will be a function of $\fG$-gain graphs that satisfies neutral deletion-contraction and neutral-loop nullity, and every function of $\fG$-gain graphs that satisfies those two properties, and has values in an abelian group $\fA$, is the restriction of a (unique) homomorphism from $\CTN(\fG)$ to $\fA$.  (These facts follow automatically from the definition of the neutral chromatic group.)  
Thus, $\CTN(\fG)$ is the universal abelian group for functions satisfying the two properties.

In the neutral chromatic group we get relations between gain graphs, in effect, by deleting and contracting neutral edges to expand any gain graph in terms of gain graphs with no neutral edge, while by addition and contraction we express it in terms of gain graphs whose neutral subgraph is the spanning complete graph $1_\fG K_n$.

Recall that $\cF(\G_0)$ is the lattice of closed sets of $\G_0$ and $\Pi^*(\G_0)$ is the set of stable partitions.  Let $\mu_0$ be the M\"obius function of $\cF(\G_0)$.  Recall also that, for a gain graph $\Phi$, $\G_0$ is the neutral subgraph of $\Phi$ and $E_0$ is the edge set of $\G_0$.

\begin{thm}\mylabel{first}
In the neutral chromatic group $\CTN(\fG)$ we have the relation
\[
\Phi = \sum_{S\in \cF(\G_0)}  \mu_0(0,S) [(\Phi/S) \setm E_0] = \sum_{S\subseteq E_0}  (-1)^{|S|} [(\Phi/S) \setm E_0] .
\]
\end{thm}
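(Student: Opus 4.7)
My plan is to mirror the inductive proof of the first identity in Lemma~\ref{graph}, substituting the neutral subgraph $\G_0$ and its edge set $E_0$ wherever $\G$ and $E$ appear in the graph argument, and invoking deletion-contraction only for neutral links, which is all that the relations~\eqref{E:dcnl} defining $\CTN(\fG)$ permit. The analog of the operation $(\G/S)\setm S^c$ is $(\Phi/S)\setm E_0$, which leaves behind a gain graph stripped of all neutral edges.

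The first step is to equate the two displayed sums. This is a direct application of Lemma~\ref{L:mu} to the closure operator on $E_0$ inherited from $\G_0$, combined with the observation that $(\Phi/S)\setm E_0 = (\Phi/\overline S)\setm E_0$ for any $S\subseteq E_0$: passing to the closure leaves the partition $\pi(S)$ unchanged, so the two contractions act identically on vertices, and the subsequent deletion erases all remaining neutral edges in either case.

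The second step is induction on $|E_0|$. The base case $|E_0|=0$ leaves only the term $S=\eset$, which equals $\Phi$. If $\Phi$ has a neutral loop $e$, then $\Phi=0$ in $\CTN(\fG)$; on the right-hand side I would pair each $S\subseteq E_0$ with $e\notin S$ to the set $S\cup\{e\}$, using that contracting a loop is the same as deleting it, so $(\Phi/S)\setm E_0 = (\Phi/(S\cup\{e\}))\setm E_0$. The paired terms carry opposite signs and cancel. Otherwise I pick a neutral link $e$, apply $\Phi=(\Phi\setm e)-(\Phi/e)$, and invoke the inductive hypothesis on each term, both of which have strictly fewer neutral edges. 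After reindexing the $\Phi/e$ sum via $S=A\cup\{e\}$, the two partial sums over $\{S\subseteq E_0 : e\notin S\}$ and $\{S\subseteq E_0 : e\in S\}$ reassemble into the full sum over $S\subseteq E_0$.

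The main obstacle is purely bookkeeping: one must verify that the neutral edge set of $\Phi/e$ can be identified with $E_0\setm\{e\}$ even when contraction produces new neutral loops from parallel neutrals, and that the identities $((\Phi\setm e)/S)\setm(E_0\setm\{e\}) = (\Phi/S)\setm E_0$ and $((\Phi/e)/A)\setm(E_0\setm\{e\}) = (\Phi/(A\cup\{e\}))\setm E_0$ hold on the nose. Both checks follow from the observation that $\setm E_0$ obliterates every neutral edge regardless of how it arose.
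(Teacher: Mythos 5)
Your proof is correct, but it takes the route the paper explicitly declines to take. Right after stating Theorems~\ref{first} and~\ref{second}, the authors remark that ``one can easily give direct proofs\dots just like those of the two identities in Lemma~\ref{graph}'' but then deliberately omit them ``in favor of ones that show the theorems are natural consequences of the relations for ordinary graphs.'' The paper's actual argument is a homomorphic one: it defines $f(\G_0/S\setm T):=\Phi/S\setm T$ on edge minors of the neutral subgraph $\G_0$, checks that $f$ is well defined because such a minor is determined by the vertex partition $\pi(S)$ and the surviving edge set, extends linearly, and shows that the relations \eqref{E:dcnl} kill the kernel so that a homomorphism $F:\CT(\G_0)\to\CTN(\fG)$ is induced. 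Theorem~\ref{first} is then literally $F$ applied to the first identity of Lemma~\ref{graph}, and Theorem~\ref{second} drops out of the second identity with no further work. Your direct induction on $|E_0|$ is a legitimate and slightly more elementary alternative; the details you flag (that $\setm E_0$ obliterates neutral edges regardless of provenance, that closure leaves $\pi(S)$ unchanged, and the reindexing identities for $\Phi\setm e$ and $\Phi/e$) are exactly the ones that need to be checked and are all correct. What the paper's approach buys is that the verification of $f$'s well-definedness is done once and both theorems follow simultaneously; what your approach buys is self-containment and the avoidance of the auxiliary chromatic group $\CT(\G_0)$.
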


\begin{thm}\mylabel{second}
In the neutral chromatic group $\CTN(\fG)$ we have the relation
\[
\Phi = \sum_{\pi\in\Pi^*(\G_0)} [(\Phi/\pi) \cup 1_\fG K_{\pi}] .
\]
\end{thm}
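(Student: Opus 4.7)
The plan is to mimic the inductive proof of the second identity in Lemma \ref{graph}, now carried out inside $\CTN(\fG)$. I would induct lexicographically on $(|V|,f(\Phi))$, where $f(\Phi)$ is the number of pairs of distinct vertices of $V$ not joined by any neutral edge of $\Phi$. In the base case $f(\Phi)=0$, every pair of distinct vertices is already joined by some neutral edge, so the only stable partition of $\G_0$ is the singleton partition $\hat 0$; then $\Phi/\hat 0 = \Phi$, and since every edge of $1_\fG K_{\hat 0}$ is parallel to a neutral edge already present in $\Phi$, invariance under neutral-edge simplification (Lemma \ref{simplifyneutral}, whose derivation inside $\CTN(\fG)$ goes through exactly as for Lemma \ref{simplifygraph} using the defining relations \eqref{E:dcnl}) gives $\Phi \cup 1_\fG K_{\hat 0} = \Phi$ in $\CTN(\fG)$, matching the single term of the sum.

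For the inductive step, suppose $f(\Phi)>0$ and pick a pair $\{v_i,v_j\}$ not joined by a neutral edge of $\Phi$; let $e$ denote a fresh neutral $v_iv_j$-link. The defining relation \eqref{E:dcnl} applied to the neutral link $e$ in $\Phi \cup e$ gives
\[
\Phi = (\Phi \cup e) + (\Phi \cup e)/e .
\]
Both summands have strictly smaller $(|V|,f)$ than $\Phi$, so the inductive hypothesis expands each as a sum over stable partitions. The combinatorial matching rests on the observation that $\Pi^*(\G_0)$ splits into those $\pi$ that separate $v_i$ and $v_j$, which coincide with $\Pi^*(\G_0 \cup e)$, and those that unite them, which correspond bijectively (by collapsing the block containing $\{v_i,v_j\}$) with $\Pi^*((\G_0\cup e)/e)$. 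For a $\pi$ in the first class the contraction $(\Phi\cup e)/\pi$ contains $e$ as a neutral link parallel to an edge of $1_\fG K_\pi$, so neutral-edge simplification yields $((\Phi\cup e)/\pi)\cup 1_\fG K_\pi = (\Phi/\pi)\cup 1_\fG K_\pi$ in $\CTN(\fG)$; for a $\pi$ in the second class and its associated $\pi'$, direct inspection shows $((\Phi\cup e)/e)/\pi' = \Phi/\pi$ and $K_{\pi'} = K_\pi$, so the summands agree term by term and summing reproduces the claimed identity.

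The main obstacle I expect is the bookkeeping between the two kinds of contraction: neutral-edge contraction $\Phi/e$ and partition contraction $\Phi/\pi$. One must verify carefully that contracting the added link $e$ first and then contracting by a stable partition $\pi'$ of $(\G_0\cup e)/e$ produces the same gain graph as contracting $\Phi$ directly by the lifted partition $\pi$ that unites $v_i$ with $v_j$, and that each parallel-neutral-edge simplification invoked above is legitimately derived from \eqref{E:dcnl} via Lemma \ref{simplifyneutral}. Non-neutral edges of $\Phi$ play no role in the combinatorics of stable partitions and simply ride through every contraction with their gains intact.
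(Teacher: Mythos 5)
Your proof is correct, but it takes a genuinely different route from the paper. The paper omits the direct argument you give and instead transfers the result from ordinary graphs: it proves the analogous identity $\G = \sum_{\pi\in\Pi^*(\G)} K_\pi$ in the graph chromatic group $\CT(\G_0)$ (second half of Lemma~\ref{graph}), constructs a homomorphism $F : \CT(\G_0) \to \CTN(\fG)$ determined by $\G_0/S \setm T \mapsto \Phi/S\setm T$, and applies $F$. (The paper explicitly remarks that ``one can easily give direct proofs of Theorems~\ref{first} and~\ref{second} just like those of the two identities in Lemma~\ref{graph}''; you have carried out exactly such a direct proof.) The homomorphic route has the advantage that a single map $F$ handles both Theorem~\ref{first} and Theorem~\ref{second} at once, and it isolates the purely graph-theoretic combinatorics from the gain bookkeeping. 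Your route is more self-contained: it never needs to verify that $F$ is well defined on the quotient $\CT(\G_0)$, but the price is that you must redo the addition--contraction/stable-partition matching with gains riding along, which you handle correctly, including the key verification that $((\Phi\cup e)/e)/\pi' = \Phi/\pi$ (legitimate because the contraction of the neutral edge $e$ uses the identity switching function, so gains on surviving edges are untouched).

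One small point worth tightening: your base case asserts that when $f(\Phi)=0$ the only stable partition of $\G_0$ is the all-singletons partition. That is only true when $\G_0$ has no loops. If $\Phi$ has a neutral loop, then no vertex set containing that vertex is stable, so $\Pi^*(\G_0)=\eset$ and the right-hand sum is empty; this still agrees with the left side, since $\Phi=0$ in $\CTN(\fG)$ by the neutral-loop relation in \eqref{E:dcnl}. So the theorem holds in that case too, but your stated reason does not cover it. This is a one-line patch, not a structural defect; the inductive step is unaffected because adding a neutral link $e$ between a non-neutrally-joined pair and contracting it never introduces a neutral loop.
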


One can easily give direct proofs of Theorems \ref{first} and \ref{second} just like those of the two identities in Lemma \ref{graph}.  We omit these proofs in favor of ones that show the theorems are natural consequences of the relations for ordinary graphs; thus we deduce them by applying a homomorphism to the relations in Lemma \ref{graph}.

\begin{proof}[Homomorphic Proof of Theorem \ref{first}.]
All vertices and edges are labelled, i.e., identified by distinct names.  
The vertices of a contraction are labelled in a particular way: the contraction by an edge set $S$ has vertex set $\pi(S)$, the partition of $V$ into the vertex sets of the connected components of $(V,S)$, and its edge set is the complement $S^c$ of $S$.  If we contract twice, say by (disjoint) subsets $S$ and $S'$, then we label the vertices of the contraction as if $S\cup S'$ had been contracted in one step.  

Now, define a function $f : \cM(\G_0) \to \cG(\fG)$ by
\[
f(\G_0/S \setm T) := \Phi/S \setm T .
\]
Given an edge minor $\G_0/S\setm T$ of the neutral subgraph $\G_0$, even though we cannot reconstruct $S$ and $T$ separately, we can reconstruct the vertex partition $\pi(S)$ by looking at the labels of the vertices of the minor, and we can reconstruct $S \cup T$ by looking at the surviving edges of the minor.  It follows that $f$ is well defined, because its value on a minor of $\G_0$ does not depend on which edges are contracted and which are deleted, as long as the vertex partition and surviving edge set are the same.  
(One can write this fact as a formula:  $\G_0/S\setm T = \G_0/\pi(S) \setm (S\cup T)$.)  

Extend $f$ linearly to a function $\bbZ\cM(\G_0) \to \bbZ\cG(\fG)$ and define $\bar f: \bbZ\cM(\G_0) \to \CTN(\fG)$ by composing with the canonical mapping $\bbZ\cG(\fG) \to \CTN(\fG)$.  
The kernel of $\bar f$ contains all the expressions $G - [(G\setm e) - (G/e)]$ for links $e$ of edge minors $G \in \cM(\G_0)$ and all expressions $G$ for edge minors with loops, because $\bar f$ maps them all to $0 \in \CTN(\fG)$ due to \eqref{E:dcnl}.  
Therefore, $\bar f$ induces a homomorphism $F: \CT(\G_0) \to \CTN(\fG)$.

Applying $F$ to the first formula of Lemma \ref{graph}, we get
\begin{equation*}
F(\G_0) = \sum_{S\in \cF(\G_0)}  \mu_0(0,S) F((\Phi/S) \setm E_0) .
\end{equation*}
This is the theorem.
\end{proof}

\begin{proof}[Homomorphic Proof of Theorem \ref{second}.]
Apply the same $F$ to the second formula of Lemma \ref{graph}.
\end{proof}

If $\Phi$ is a gain graph, let $\Phi^0 := \Phi \cup 1_\fG K_n$, i.e., $\Phi$ with all possible neutral links added in.  

\begin{cor}\mylabel{complete}
In the neutral chromatic group, if $\Phi$ has no neutral edges, then
\[
\Phi^0 = \sum_{\pi\in\Pi_n}  \mu(0,\pi) (\Phi/\pi) ,
\]
where $\mu$ is the M\"obius function of $\Pi_n$, and
\[
\Phi = \sum_{\pi\in\Pi_n} (\Phi/\pi)^0 .
\]
\end{cor}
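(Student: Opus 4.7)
The plan is to derive both identities by applying Theorems \ref{first} and \ref{second} to two carefully chosen gain graphs, and then translating the indexing via closed-set/stable-partition bijections with $\Pi_n$.

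For the first identity, I would apply Theorem \ref{first} to $\Phi^0$ rather than to $\Phi$. Since $\Phi$ has no neutral edges by hypothesis, the neutral subgraph of $\Phi^0$ is exactly $1_\fG K_n$, so its underlying graph is $K_n$. The key identification is that $\cF(K_n)$ is isomorphic, as a lattice, to the partition lattice $\Pi_n$: a subset $S$ of the edges of $K_n$ is closed precisely when it is the edge set $E(K_\pi)$ of some partition $\pi$ of $[n]$, and under this bijection the M\"obius function $\mu_0$ on $\cF(K_n)$ becomes the M\"obius function $\mu$ on $\Pi_n$. It remains to check that $(\Phi^0/S)\setm E_0 = \Phi/\pi$ where $\pi\leftrightarrow S$. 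Contracting $\Phi^0$ along the closed set $S = E(K_\pi)$ identifies the vertices inside each block of $\pi$; after this, the neutral edge set of $\Phi^0/S$ is precisely the remaining $1_\fG K_\pi$, and deleting it leaves exactly $\Phi/\pi$, since $\Phi$ itself contributes no neutral edges. Substituting these identifications into the conclusion of Theorem \ref{first} yields
\[
\Phi^0 = \sum_{\pi\in\Pi_n} \mu(0,\pi)\,(\Phi/\pi).
\]

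For the second identity, I would instead apply Theorem \ref{second} directly to $\Phi$. Because $\Phi$ has no neutral edges, the neutral subgraph $\G_0$ is edgeless on $V$, so every partition of $V$ is stable; that is, $\Pi^*(\G_0) = \Pi_n$. For any $\pi \in \Pi_n$, the summand $(\Phi/\pi)\cup 1_\fG K_\pi$ is, by the very definition $\Psi^0 := \Psi \cup 1_\fG K_{|V(\Psi)|}$ applied to $\Psi = \Phi/\pi$, equal to $(\Phi/\pi)^0$. Hence Theorem \ref{second} specializes to
\[
\Phi = \sum_{\pi\in\Pi_n} (\Phi/\pi)^0.
\]

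Neither step requires any induction or new computation beyond the two theorems already proved; the only subtlety is the bookkeeping for the closed sets/stable partitions of $K_n$ and of the edgeless graph on $V$, which is the one place where the hypothesis that $\Phi$ has no neutral edges is used in each direction. I do not expect a genuine obstacle; the main care is simply to verify the identification $(\Phi^0/E(K_\pi))\setm E(K_n) = \Phi/\pi$, which follows from the definition of contraction and the assumption on $\Phi$.
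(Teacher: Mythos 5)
Your proof is correct and follows the same route as the paper: apply Theorem \ref{first} to $\Phi^0$, identify $\cF(K_n)\cong\Pi_n$, and observe $(\Phi^0/S)\setm E_0=\Phi/\pi$; apply Theorem \ref{second} to $\Phi$, where $\G_0$ edgeless means $\Pi^*(\G_0)=\Pi_n$ and $(\Phi/\pi)\cup 1_\fG K_\pi=(\Phi/\pi)^0$. You simply spell out the bookkeeping that the paper's terse three-sentence proof leaves implicit.
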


\begin{proof}
Indeed, the identities follow from Theorems \ref{first} and \ref{second}.  
In the theorems the graph of neutral edges of $\Phi^0$ is the complete graph.  So the flats are exactly the partitions of $[n]$.  Contracting $\Phi$ by $\pi$ introduces no neutral edges so in Theorem \ref{first} it is not necessary to delete them.
\end{proof}

\section{Weak chromatic invariants of gain graphs}\mylabel{wci}

One can strengthen the definition of a weak chromatic function by requiring it to be invariant under some transformation of the gain graph.  Examples:
\begin{itemize}
\item \emph{Isomorphism invariance}:  The value of $F$ is the same for isomorphic gain graphs.
\item \emph{Switching invariance}:   The value of $F$ is not changed by switching.
\item \emph{Invariance under simplification}:  $F$ takes the same value on a gain graph and its simplification.
\end{itemize}
A \emph{weak chromatic invariant} of gain graphs is a function that satisfies the deletion-contrac\-tion formula \eqref{E:dc} for all links, neutral-loop nullity, and invariance under isomorphism, switching, and simplification.  It is a \emph{chromatic invariant} if it is also multiplicative on connected components, i.e., 
\begin{equation}\mylabel{E:mult}
F(\Phi_1 \cupdot \Phi_2) = F(\Phi_1) F(\Phi_2).
\end{equation}

\begin{lem}\mylabel{L:simplify}
Let $F$ be a function on all gain graphs with a fixed gain group that is switching invariant and satisfies deletion-contraction for neutral links.  Then neutral-loop nullity is equivalent to invariance under simplification and it implies isomorphism invariance.
\end{lem}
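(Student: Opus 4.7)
The plan is to mirror the argument of Lemma~\ref{simplifyneutral}, enlarging it by (i) using switching invariance to turn any digon of common gain $g$ into a neutral digon, and (ii) using a vertex-splitting construction to handle loops that cannot be neutralised by switching.

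\emph{Forward direction: neutral-loop nullity implies invariance under simplification.}
Let $\Phi' = \Phi\setm f$, where $\{e,f\}$ is a pair of parallel edges of common gain $g$ in $\Phi$. If $e,f$ are links joining $v_i,v_j$, I would switch by the function $\eta$ with $\eta_i=g$ and $\eta_k=1_\fG$ otherwise; then both edges are neutral in $\Phi^\eta$, deletion-contraction of $e$ turns $f$ into a neutral loop, so $F(\Phi^\eta/e)=0$ and $F(\Phi^\eta)=F(\Phi^\eta\setm e)$; switching invariance converts this back to $F(\Phi)=F(\Phi\setm f)$. If $e,f$ are loops at $v$ with $g=1_\fG$, both values vanish directly. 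For loops with $g\neq 1_\fG$, since switching only conjugates loop gains, I would split $v$ into two new vertices $v_1,v_2$ joined by a fresh neutral link $h$ and lift $e,f$ to parallel links from $v_1$ to $v_2$ of gain $g$, writing $\Psi$ for the enlarged graph; then $\Psi/h=\Phi$ and $(\Psi\setm f)/h=\Phi\setm f$. The link-digon case already handled gives both $F(\Psi)=F(\Psi\setm f)$ and $F(\Psi\setm h)=F(\Psi\setm f\setm h)$. Applying deletion-contraction of $h$ in $\Psi$ and in $\Psi\setm f$ and subtracting, these four identities collapse to $F(\Phi)=F(\Phi\setm f)$.

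\emph{Reverse direction: invariance under simplification implies neutral-loop nullity.}
Given $\Phi$ with a neutral loop $e$ at $v$, I would apply the same splitting: $e$ becomes a neutral link $v_1v_2$ and together with the bridge $h$ forms a neutral link digon in an enlarged graph $\Psi$ with $\Psi/h=\Phi$. Simplification invariance applied to $\{e,h\}$ yields $F(\Psi)=F(\Psi\setm h)$, and deletion-contraction of $h$ yields $F(\Psi)=F(\Psi\setm h)-F(\Phi)$; equating these forces $F(\Phi)=0$.

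\emph{Isomorphism invariance.}
Given an isomorphism $\sigma\colon\Phi\to\Phi'$, I would apply Theorem~\ref{first} to both and use $\sigma$ to match up summands; each pair $(\Phi/S)\setm E_0$ and $(\Phi'/\sigma(S))\setm E_0'$ is $\sigma$-isomorphic and has no neutral edges. This reduces the problem to the subcase of gain graphs without neutral edges, where one must argue that the vertex relabelling realising $\sigma$ can be absorbed into a switching and then invoke switching invariance.

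The main obstacle is the isomorphism-invariance step: on gain graphs without neutral edges, neither deletion-contraction nor loop nullity applies, so it must be made precise in exactly what sense a vertex-relabelling isomorphism is captured by a switching of the target. The two directions of the equivalence are otherwise routine extensions of Lemma~\ref{simplifyneutral}; the only genuinely new ingredient there is the vertex-splitting trick that lifts loops to links and allows the link-digon argument to be used twice to cancel out the boundary terms.
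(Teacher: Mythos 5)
Your proof of the biconditional is essentially the paper's: switch so the relevant link is neutral, then deletion--contraction plus neutral-loop nullity on a balanced link digon. The only genuine addition is the vertex-splitting device for a pair of parallel \emph{loops} of common non-neutral gain, and you were right to add it: the paper's own argument at this point only treats link digons (a loop cannot be contracted, and switching only conjugates a loop gain), whereas simplification as defined in Section~\ref{defs} does remove repeated parallel loops. Your splitting argument is correct --- from $F(\Psi)=F(\Psi\setm f)$, $F(\Psi\setm h)=F(\Psi\setm f\setm h)$, and the two deletion--contraction identities for the neutral bridge $h$, one indeed gets $F(\Phi)=F(\Phi\setm f)$ by subtraction. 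The reverse direction via the same split is exactly the paper's ``a neutral loop is the contraction of a neutral digon'' step, just phrased differently.

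The gap you flag in the isomorphism step is real, and the route you propose cannot close it. After reducing via Theorem~\ref{first} to gain graphs with no neutral edges, the axioms give you nothing to work with, and the missing piece is \emph{not} a switching argument: switching changes edge gains but never permutes vertices, so a vertex relabelling cannot be absorbed into a switching of the target. The paper closes the gap by a different device that avoids the reduction entirely. Having already established invariance under simplification, it places the two isomorphic loopless gain graphs $\Phi$ and $\Phi'$ on a common vertex set with the isomorphism the identity on vertices, and with disjoint edge sets, then forms the union $\Phi\cup\Phi'$. Each edge $e$ and its image $e'$ form a balanced digon, so $\Phi$ and $\Phi'$ are both simplifications of $\Phi\cup\Phi'$ and hence $F(\Phi)=F(\Phi\cup\Phi')=F(\Phi')$. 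Loops are then handled by induction on their number, lifting each to a link across a fresh neutral bridge (the same splitting trick you used earlier) and applying deletion--contraction. So the missing idea is the union-of-digons construction; once you have invariance under simplification, isomorphism invariance falls out of it without any appeal to Theorem~\ref{first}.
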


\begin{proof}
Suppose $F$ is switching invariant and satisfies deletion-contraction for all neutral links.  Then it satisfies deletion-contraction for all links, because any gain graph $\Phi$ can be switched to give gain $1_\fG$ to any desired link.

Suppose $F$ also has neutral-loop nullity.  When we contract an edge $e$ in a balanced digon $\{e,f\}$, $f$ becomes a neutral loop because of the switching that precedes contraction.  
By deletion-contraction and neutral-loop nullity,
\[
F(\Phi) = F(\Phi\setm e) - F(\Phi/e) = F(\Phi\setm e) + 0.
\]
Conversely, if $\Phi'$ has a neutral loop $f$, it is the contraction of a gain graph $\Phi$ with a neutral digon $\{e,f\}$ and the same reasoning works in reverse.

Now we deduce isomorphism invariance from invariance under simplification.  Suppose we have two isomorphic gain graphs, $\Phi$ and $\Phi'$, with different edge sets.  We may assume the edge sets are disjoint and that, under the isomorphism, the vertex bijection is the identity and $e \leftrightarrow e'$ for edges.  Take the union of the two graphs on the same vertex set; the union contains balanced digons $\{e,e'\}$.  If we remove $e$ from each pair we get $\Phi'$, but if we remove $e'$ we get $\Phi$.  The value of $F$ is the same either way.

This treatment omits loops.  Balanced loops make $F$ equal to $0$.  For unbalanced loops we induct on their number.  We can treat $\Phi$ as the contraction $\Psi/f$ where $\Psi$ is a gain graph in which $e,f$ are parallel links and $f$ is neutral, and similarly $\Phi' = \Psi'/f'$.  Since $F(\Psi) = F(\Psi')$ and $F(\Psi\setm e) = F(\Psi'\setm e')$ by induction, $F(\Phi) = F(\Phi')$ by deletion-contraction.
\end{proof}

\begin{prop}\mylabel{invariance}
A switching-invariant weak chromatic function of all gain graphs with a fixed gain group is a weak chromatic invariant.
\end{prop}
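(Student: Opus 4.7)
The plan is to extract the proposition directly from Lemma \ref{L:simplify} together with an observation about deletion-contraction. A switching-invariant weak chromatic function by definition satisfies neutral deletion-contraction, neutral-loop nullity, and switching invariance, so the hypotheses of Lemma \ref{L:simplify} are met. What remains is to identify the conclusions of that lemma with the three additional properties required of a weak chromatic invariant: deletion-contraction for all links, invariance under (full) simplification, and isomorphism invariance.

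First I would observe, exactly as in the first paragraph of the proof of Lemma \ref{L:simplify}, that switching invariance upgrades neutral deletion-contraction to deletion-contraction for every link: given any link $e$ in $\Phi$, choose a switching function $\eta$ supported at one endpoint of $e$ so that $\phi^\eta(e) = 1_\fG$; since deletion and contraction commute with switching and $F$ is switching invariant, the identity $F(\Phi^\eta) = F(\Phi^\eta \setm e) - F(\Phi^\eta / e)$ descends to the same identity for $\Phi$.

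Next I would invoke Lemma \ref{L:simplify} to conclude that $F$ is invariant under neutral-edge simplification and is isomorphism invariant. To finish I would extend invariance under neutral-edge simplification to full simplification: if $\{e,f\}$ is any pair of parallel edges with the same gain, then $\{e,f\}$ is a balanced digon, so there is a switching that makes both $e$ and $f$ neutral; then invariance under neutral-edge simplification plus switching invariance gives $F(\Phi) = F(\Phi \setm e)$. Parallel loops of the same gain reduce to the balanced digon case by the induction on unbalanced loops given at the end of the proof of Lemma \ref{L:simplify}.

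There is no real obstacle here: everything needed has essentially been established in Lemma \ref{L:simplify}, and the proposition is largely a terminological packaging. The only point requiring a little care is verifying that the notion of ``simplification'' appearing in the definition of a weak chromatic invariant (removing duplicates among \emph{equally gained} parallel edges, loops included) is the one produced by Lemma \ref{L:simplify}, and this reduces to the balanced digon argument sketched above.
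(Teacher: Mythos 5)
Your proof is correct and follows the same route as the paper: first use switching to upgrade neutral deletion-contraction to deletion-contraction for all links, then appeal to Lemma \ref{L:simplify} for invariance under simplification and under isomorphism. One small point: you state that Lemma \ref{L:simplify} only yields invariance under \emph{neutral-edge} simplification and then add a third step to upgrade it to full simplification via balanced digons. In fact the lemma already asserts full invariance under simplification (it is Lemma \ref{simplifyneutral}, not \ref{L:simplify}, that is restricted to neutral digons), and its proof is precisely the balanced-digon-plus-switching argument you reproduce, so that extra step is redundant rather than missing. Aside from this harmless duplication the argument matches the paper's.
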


\begin{proof}
By switching we can make any link into a neutral link.  Apply Lemma \ref{L:simplify}.
\end{proof}

(We could formulate these properties of functions in terms of new chromatic groups, which are quotients of the weak chromatic group obtained by identifying gain graphs that are equivalent under a suitable equivalence, like simplification, switching, or isomorphism.  However, that would contribute nothing to our general theory and it seems an overly complicated way to do the computations in the second half of this paper.)

\begin{example}\mylabel{X:tcpoly}
Weak chromatic invariants abound, but the most important is surely the total chromatic polynomial.  A \emph{multi-zero coloration} is a mapping $\kappa: V \to (\fG\times[k]) \cup [z]$, where $k$ and $z$ are nonnegative integers.  It is \emph{proper} if it satisfies none of the following edge constraints, for any edge $e_{ij}$: 
\begin{align*}
\kappa(v_j) &= \kappa(v_i) \in [z]
\intertext{or}
\kappa(v_i) &= (m,g) \in \fG\times[k] \text{ and } \kappa(v_j) = (m,g\phi(e_{ij})) .
\end{align*}
When $\fG$ is finite, the \emph{total chromatic polynomial} is defined by 
\begin{equation}\mylabel{E:tcpolyc}
\tilde\chi_\Phi(q,z) = \text{ the number of proper multi-zero colorations},
\end{equation}
where $q = k|\fG| + z$.  This function combines the \emph{chromatic polynomial},
\begin{equation}\mylabel{E:cp}
\chi_\Phi(q) = \tilde\chi_\Phi(q,1),
\end{equation}
and the \emph{zero-free chromatic polynomial},
\begin{equation}\mylabel{E:zcp}
\chi^*_\Phi(q) = \tilde\chi_\Phi(q,0),
\end{equation}
of \cite{SGC} and \cite[Part III]{BG}.  
All three polynomials generalize the chromatic polynomial, for, regarding an ordinary graph $\Gamma$ as a gain graph with gains in the trivial group, we see that 
\[
\tilde\chi_\Gamma(q,z) = \chi_\Gamma(q)
\]
(which is independent of $z$), where $\chi_\Gamma(q)$ is the usual chromatic polynomial of $\Gamma$.

A second definition of the chromatic polynomials, which is algebraic, applies to all gain graphs, including those with infinite gain group.  We define a total chromatic polynomial for any gain graph by the formula 
\begin{equation}\mylabel{E:tcpoly}
\tilde\chi_\Phi(q,z) := \sum_{S\subseteq E} (-1)^{|S|} q^{b(S)} z^{c(S)-b(S)} ,
\end{equation}
where $b(S)$ is the number of components of $(V,S)$ that are balanced, and we define the chromatic and zero-free chromatic polynomials by means of \eqref{E:cp} and \eqref{E:zcp}.

\begin{prop}\mylabel{P:tcpoly}
The total chromatic polynomial is a weak chromatic invariant of gain graphs.  The combinatorial and algebraic definitions, \eqref{E:tcpolyc} and \eqref{E:tcpoly}, agree when both are defined.  
\end{prop}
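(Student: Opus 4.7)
The plan is to work primarily from the algebraic definition \eqref{E:tcpoly} and verify, in order, (a) switching invariance, (b) neutral deletion-contraction, and (c) neutral-loop nullity; Lemma \ref{simplifyneutral} then gives invariance under neutral-edge simplification, so $\tilde\chi_\Phi$ is a weak chromatic function, and Proposition \ref{invariance} upgrades it to a weak chromatic invariant. Finally, I would reconcile the combinatorial definition \eqref{E:tcpolyc} with the algebraic one by a single inclusion-exclusion over the edge constraints.

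Switching invariance is immediate: switching changes neither the partition $\pi(S)$ nor the balance of any subgraph (both noted in Section \ref{defs}), so $c(S)$, $b(S)$, and $|S|$ are all switching-invariant, and hence so is every term of \eqref{E:tcpoly}. For deletion-contraction with a neutral link $e$, I would split $\sum_{S\subseteq E}=\sum_{e\notin S}+\sum_{e\in S}$. The first sum is manifestly $\tilde\chi_{\Phi\setm e}(q,z)$. For the second, write $S=S'\cup\{e\}$ with $S'\subseteq E\setm e$ and check that $c$ and $b$ are preserved when $S$ in $\Phi$ is replaced by $S'$ in $\Phi/e$: since $e\in S$ its endpoints already share an $S$-component, so contraction does not alter the component count, and since $e$ is neutral every circle of $S$ corresponds to a circle of $S'$ with the same gain, preserving $b$. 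The sign flip from $|S|=|S'|+1$ then yields $-\tilde\chi_{\Phi/e}(q,z)$, giving \eqref{E:dc}. For neutral-loop nullity, let $e$ be a neutral loop and pair each $S\not\ni e$ with $S\cup\{e\}$: a loop adds no connectivity, and a neutral loop is a balanced circle that does not disturb the balance of the component containing its vertex, so $c$ and $b$ agree across the pair while the signs differ, and everything cancels.

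For the equivalence of \eqref{E:tcpolyc} and \eqref{E:tcpoly} when $\fG$ is finite, apply inclusion-exclusion on the set of edge constraints. Let $N(S)$ count the multi-zero colorations that violate the constraint at every $e\in S$. Within each connected component $C$ of $(V,S)$ the forced alternatives are (i) all vertices share a common color in $[z]$, giving $z$ choices regardless of balance, or (ii) all vertices take $\fG\times[k]$-colors with a common $[k]$-coordinate and $\fG$-coordinates propagated along a spanning tree of $C$ from a basepoint. Alternative (ii) is globally consistent precisely when every circle in $C$ has neutral gain---i.e., when $C$ is balanced---in which case there are $k|\fG|$ basepoint choices. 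Hence a balanced component contributes $k|\fG|+z=q$ and an unbalanced one contributes $z$, so $N(S)=q^{b(S)}z^{c(S)-b(S)}$; inclusion-exclusion then reproduces \eqref{E:tcpoly}.

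The only substantive step is the circle-consistency check supporting case (ii), which is the standard fact that a balanced connected gain graph can be switched to have all-neutral gains (cited when contraction of balanced sets was defined); the rest is bookkeeping with signs and component-level factorizations.
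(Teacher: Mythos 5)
Your proposal is correct and reaches the same conclusion, but it takes a noticeably more self-contained route than the paper on two points. First, for the equivalence of the combinatorial and algebraic definitions, the paper cites the state-chromatic-function machinery of \cite{TFS} (identifying $\tilde\chi_\Phi(k|\fG|+z,z)$ with a specialization of the state chromatic function and then invoking \cite[Eq.\ (4.3)]{TFS} together with Lemma~\ref{L:mu}); you instead give a direct inclusion--exclusion over edge constraints, computing $N(S)=q^{b(S)}z^{c(S)-b(S)}$ from the observation that a component of $(V,S)$ on which all constraints are violated either collapses to a single $[z]$-color ($z$ ways) or carries a $\fG\times[k]$-coloring propagated from a basepoint, the latter being possible exactly when the component is balanced ($k|\fG|$ ways). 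This is a clean elementary replacement for the citation, and your identification of balance as the consistency condition is exactly where the substance lies; be aware the paper's displayed edge constraint $\kappa(v_j)=(m,g\phi(e_{ij}))$ has the coordinates written inconsistently with $(m,g)\in\fG\times[k]$ --- your reading (gain acts on the $\fG$-coordinate, $[k]$-coordinate fixed) is the intended one. Second, for deletion--contraction the paper proves the identity for an \emph{arbitrary} link, invoking \cite[Lemma I.4.3]{BG} for $b_\Phi(S)=b_{\Phi/e}(S\setminus e)$, whereas you prove it only for neutral links (where the preservation of $b$ is immediate) and then let switching invariance plus Proposition~\ref{invariance} extend it to all links. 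Both routes are valid; yours leans more on the general-position lemmas of the paper and less on \cite{BG}, and in that sense is slightly more economical. You silently drop the paper's verifications of isomorphism invariance and multiplicativity, which is fine: Lemma~\ref{L:simplify} delivers isomorphism invariance from what you proved, and multiplicativity is not required for a \emph{weak} chromatic invariant.
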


\begin{proof}
The first task is to show that the two definitions of the total chromatic polynomial agree.  
The combinatorial total chromatic polynomial is the special case of the state chromatic function $\chi_\Phi(\bQ)$ of \cite[Section 2.2]{TFS} in which the spin set $\bQ = (\fG\times[k]) \cup [z]$.  That is, $\tilde\chi_\Phi(k|\fG|+z,z) = \chi_\Phi(\bQ)$.  This is obvious from comparing the definitions.  Indeed, $\tilde\chi_\Phi(q,z)$ as defined in \eqref{E:tcpolyc} is precisely the state chromatic function $\chi_{\Phi;\bQ_1,\bQ_2}(k_1,k_2)$ of the example in \cite[Section 4.3]{TFS} with the substitutions $q = k_1|\fG|+k_2$ and $z = k_2$.  

Consequently, the combinatorial total chromatic polynomial has all the properties of a state chromatic function.  The chief of these properties is that it agrees with the algebraic polynomial of \eqref{E:tcpoly}.  This fact is \cite[Equation (4.3)]{TFS} combined with Lemma \ref{L:mu} and the observation that the fundamental closure of $S$ has the same numbers of components and of balanced components as does $S$ \cite[p.\ 144]{TFS}.

The second task is to prove that the algebraic total chromatic polynomial is a chromatic invariant.  
Isomorphism invariance is obvious from the defining equation \eqref{E:tcpoly}.  
Switching invariance follows from the fact that $b(S)$ and $c(S)$ are unchanged by switching.  
Multiplicativity, Equation \eqref{E:mult}, is easy to prove by the standard method of splitting the sum over $S$ into a double sum over $S \cap E(\Phi_1)$ and $S \cap E(\Phi_2)$.
Reasoning like that in the proof of Lemma \ref{graph} proves neutral-loop nullity.  
By Proposition \ref{invariance}, if $\tilde\chi_\Phi$ satisfies deletion-contraction, \eqref{E:dc}, for every link then it is invariant under simplification and thus is a chromatic invariant.  

Thus, we must prove that $\tilde\chi_\Phi$ does satisfy deletion-contraction with respect to a link $e$.  The method is standard---e.g., see the proof of \cite[Theorem III.5.1]{BG}.  
We need two formulas about the contraction $\Phi/e$.  Suppose $e \in S \subseteq E$.  
Clearly, $c_{\Phi}(S) = c_{\Phi/e}(S \setm e)$.  \cite[Lemma I.4.3]{BG} tells us that $b_\Phi(S) = b_{\Phi/e}(S \setm e)$.  
Now we calculate: 
\begin{align*}
\tilde\chi_\Phi(q,z) - \tilde\chi_{\Phi\setm e}(q,z) 
&= \sum_{\substack{S\subseteq E \\ e \in S}}  (-1)^{|S|} q^{b_\Phi(S)} z^{c_\Phi(S)-b_\Phi(S)} \\
&= \sum_{T \subseteq E \setm e} (-1)^{|T|+1} q^{b_{\Phi/e}(T)} z^{c_{\Phi/e}(T)-b_{\Phi/e}(T)} \\
&= - \tilde\chi_{\Phi/e}(q,z) ,
\end{align*}
where again $T := S \setm e$.  This proves \eqref{E:dc}.  

By Proposition \ref{invariance}, therefore, $\tilde\chi_\Phi$ is a chromatic invariant of gain graphs.
\end{proof}
\end{example}

\section{Integral gain graphs and integral affinographic hyperplanes}\mylabel{igg}

An \emph{integral gain graph} is a gain graph whose gain group is the additive group of integers, $\bbZ$.  The ordering of the gain group $\bbZ$ singles out a particular switching function $\eta_S$: it is the one whose minimum value on each block of $\pi(S)$ is zero.  We call this the \emph{top switching function}.  The contraction rule is that one uses the top switching function; thus the contraction can be uniquely defined, unlike the situation in general.  

Contraction of a balanced edge set $S$ in an integral gain graph can be defined quite explicitly.  

First, we define $\eta_S$.  In each component $(V_i,S_i)$ of $S$, pick a vertex $w_i$ and, for $v \in V_i$, define $\eta(v) := \phi(S_{vw_i})$ for any path $S_{vw_i}$ from $v$ to $w_i$ in $S$.  ($\eta$ is well defined because $S$ is balanced.)  Let $v_i$ be a vertex which minimizes $\eta(v)$ in $V_i$.  Define $\eta_S(v) := \phi(S_{vv_i}) = \eta(v) - \eta(v_i)$ for $v \in V_i$.  Then $\eta_S$ is the top switching function for $S$, since $\eta_S(v_i) = 0 \leq \eta_S(v)$ for all $v \in V_i$.  

Next, we switch.  In $\Phi^{\eta_S}$, the gain of an edge $e_{vw}$, where $v \in V_i$ and $w \in V_j$, is $\phi^{\eta_S}(e_{vw}) = -\eta_S(v) + \phi(e_{vw}) + \eta_S(w) = \phi(S_{v_iv}) + \phi(e_{vw}) + \phi(S_{wv_j}) = \phi(S_{v_iv}e_{vw}S_{wv_j})$.  That is, $\phi^{\eta_S}(e_{vw})$ is the gain of a path from $v_i$ to $v_j$ that lies entirely in $S$ except for $e_{vw}$ if that edge is not in $S$.  (If $e_{vw}$ is in $S$, its switched gain is $0$, consistent with the fact that then $v_i=v_j$.)

Finally, we contract $S$.  We can think of this as collapsing all of $V_i$ into the single vertex $v_i$ and deleting the edges of $S$, while not changing the gain of any edge outside $S$ from its switched gain $\phi^{\eta_S}(e_{vw}) = \phi(S_{v_iv}e_{vw}S_{wv_j})$.  (If there happens to be an edge $v_iv_j$, it will have the same gain in $\Phi/S$ as it did in $\Phi$.)

\medskip

A kind of invariance that will now become important is:
\begin{itemize}
\item \emph{Loop independence}:  The value of $F$ is not changed by removing nonneutral loops.
\end{itemize}
Loop independence, when it holds true, permits calculations by means of contraction.  The zero-free chromatic polynomial and both of the next two examples have loop independence, which we employ to good effect in Propositions \ref{catalan} and \ref{linialexp}.

\begin{example}\mylabel{X:intcf}
The \emph{integral chromatic function} $\chiZ_\Phi(q)$ (from \cite{SOA}) is the number of proper colorations of $\Phi$ by colors in the set $[q]$, \emph{proper} meaning subject to the conditions given by the gains of the edges.  
This function is a weak chromatic function of integral gain graphs but it is not invariant under switching, so it is not a weak chromatic invariant.  
It is loop independent, because the color of a vertex is never constrained by a loop with nonzero additive gain.

That the integral chromatic function has neutral-loop nullity is obvious from the definition.  To show it satisfies neutral deletion-contraction, consider a proper coloration of $\Phi \setm e$, where $e$ is a neutral link, using colors in $[q]$.  If the endpoints of $e$ have different colors, we have a proper coloration of $\Phi$; if they have the same color, we have a proper coloration of $\Phi/e$.  (This argument is standard in graph coloring, corresponding to the fact that the neutral subgraph acts like an ordinary graph.)  

The reasoning fails if $e$ has non-identity gain, and switching really does change $\chiZ_\Phi(q)$.  Consider $\Phi$ with two vertices $1$ and $2$ and one edge $e$ of nonnegative gain $g \in [q]$ in the orientation from $1$ to $2$.  All such gain graphs are switching equivalent.  The rule for a proper coloration $\kappa$ is that $\kappa_2 \neq \kappa_1+g$.  Of all the $q^2$ colorations, the number excluded by this requirement is $q-g$ (or $0$ if $q-g<0$).  Assuming $0 \leq g \leq q$, $\chiZ_\Phi(q) = q(q-1) + g$, obviously not a switching invariant.
\end{example}

\begin{example}\mylabel{X:modcf}
The \emph{modular chromatic function} $\chim_\Phi(q)$ (also from \cite{SOA}) is the number of proper colorations of the vertices by colors in $\bbZ_q$.  

The remarks at the end of \cite[Section 6]{SOA} imply that $\chim_\Phi(q)$ is a weak chromatic invariant.  The idea is that $\chim_\Phi(q) = \chi^*_{\Phi \pmod q}(q)$ where $\Phi \pmod q$ is $\Phi$ with gains modulo $q$.  Take integral gain graphs $\Phi$ and $\Phi'$ such that $\Phi'$ is isomorphic to some switching of $\Phi$.  Then the same is true for $\Phi \pmod q$ and $\Phi' \pmod q$ with switching modulo $q$.  Since $\chi^*_{\Phi \pmod q}(q)$ for fixed $q$ is a weak chromatic invariant of gain graphs with gains in $\bbZ_q$, $\chim_\Phi(q)$ is a weak chromatic invariant of integral gain graphs.

The modular chromatic function is loop independent, for the same reason as is the integral chromatic function.
\end{example}

The modular chromatic function is not too different from the zero-free chromatic polynomial.  Write 
$$
{\max}_{\odot}(\Phi) := \text{ the maximum gain of any circle in } \Phi.
$$

\begin{lem}[{see \cite[Section 11.4, p.\ 339]{PDS}}]\mylabel{modchromatic}  
The modular chromatic function of an integral gain graph $\Phi$ is given by
\begin{equation*}
\chim_\Phi(q) = \chi^*_\Phi(q)  \qquad\text{ for integers } q > {\max}_{\odot}(\Phi),
\end{equation*}
but equality fails in general for $q = {\max}_{\odot}(\Phi)$.
\end{lem}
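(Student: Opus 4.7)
The plan is to combine the identity $\chim_\Phi(q) = \chi^*_{\Phi \pmod q}(q)$ from Example~\ref{X:modcf} with the subset-sum expansion of $\chi^*$ obtained by specializing the algebraic definition \eqref{E:tcpoly} at $z=0$. Once those two pieces are in place, the lemma reduces to a small divisibility comparison between balance over $\bbZ$ and balance modulo $q$.

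First I would note that setting $z=0$ in \eqref{E:tcpoly} kills every term with $c(S) > b(S)$, leaving
\[
\chi^*_\Psi(q) \;=\; \sum_{S \subseteq E(\Psi),\ S \text{ balanced}} (-1)^{|S|}\, q^{c(S)}
\]
for any gain graph $\Psi$. Applied to $\Psi = \Phi$ and to $\Psi = \Phi \pmod q$, the two graphs share the same underlying $(V,E)$, so the exponents $|S|$ and $c(S)$ are identical in the two sums; only the indexing collections of \emph{balanced} subsets could differ.

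Next I would compare those collections. Any circle with integer gain $0$ has gain $\equiv 0 \pmod q$, so every balanced subset of $\Phi$ remains balanced in $\Phi \pmod q$. For the converse, if some circle $C \subseteq S$ satisfies $\phi(C) \equiv 0 \pmod q$ but $\phi(C) \neq 0$, then $|\phi(C)| \geq q$, contradicting $|\phi(C)| \leq {\max}_{\odot}(\Phi) < q$. Hence the two collections coincide when $q > {\max}_{\odot}(\Phi)$, the subset sums agree, and the asserted equality $\chim_\Phi(q) = \chi^*_\Phi(q)$ follows.

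To justify the claim that equality fails in general at $q = {\max}_{\odot}(\Phi)$, one small example suffices. I would take $\Phi$ with two vertices joined by two parallel links of gains $0$ and $q$, so that ${\max}_{\odot}(\Phi) = q$. A direct tabulation of the four edge subsets gives $\chi^*_\Phi(q) = q(q-2)$, while in $\Phi \pmod q$ both gains become $0$, the digon becomes balanced, and the additional term $(-1)^2 q^1$ appears, producing $\chim_\Phi(q) = q(q-1)$. The only real subtlety is cosmetic: ``the maximum gain of any circle'' must be read as the maximum of $|\phi(C)|$ over circles $C$ (since the gain of a circle is well-defined only up to sign), which is precisely the quantity the divisibility step requires.
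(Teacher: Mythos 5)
Your argument for the case $q > {\max}_{\odot}(\Phi)$ is essentially the paper's own: pass to $\chi^*_{\Phi \pmod q}(q)$ via the identity in Example~\ref{X:modcf}, specialize \eqref{E:tcpoly} at $z=0$ so that only balanced edge sets survive, and note that reducing gains modulo $q > {\max}_{\odot}(\Phi)$ cannot newly balance any circle, since $\phi(C)\equiv 0\pmod q$ with $\phi(C)\neq 0$ would force $|\phi(C)|\geq q$, contradicting $|\phi(C)|\leq {\max}_{\odot}(\Phi)<q$. Where you genuinely go beyond the paper is on the final clause. The paper merely observes that at $q = {\max}_{\odot}(\Phi)$ some unbalanced circle becomes balanced, remarks that one ``cannot expect equality,'' and then concedes in a parenthetical that it has not determined whether equality might still hold; since enlarging the collection of balanced subsets need not change the value of the alternating sum, that observation is not by itself a proof. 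Your two-parallel-link example settles the point: both computations are correct ($\chi^*_\Phi(q)=q(q-2)$ while $\chim_\Phi(q)=q(q-1)$), and so the lemma's second assertion is actually established rather than only suggested. Your side remark about reading ${\max}_{\odot}$ as $\max_C|\phi(C)|$ is harmless: since circle gains are defined only up to choice of orientation, maximizing over all oriented circles already returns the absolute value.
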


\begin{proof} 
If we take the integral gains modulo $q > {\max}_{\odot}(\Phi)$, we do not change the balanced circles, because no nonzero circle gain is big enough to be reduced to $0$.  
By Equation \eqref{E:tcpoly} with $z=0$ so the sum may be restricted to balanced edge sets $S$, we do not change the zero-free chromatic polynomial.  Proper colorations in $\bbZ_q$ with modular gains are proper modular colorations.

If $q = {\max}_{\odot}(\Phi)$, at least one unbalanced circle becomes balanced so we do change the list of balanced edge sets and we cannot expect equality.  (We have not tried to decide whether equality is possible at all.) 
\end{proof}

This lemma, though disguised by talk about finite fields and the Critical Theorem, is fundamentally the same method used by Athanasiadis in most of his examples in \cite{Athan}; see \cite[Section 6]{SOA}.

\medskip

The \emph{affinographic hyperplane arrangement} that corresponds to an integral gain graph $\Phi$ is the set $\cA$ of all hyperplanes in $\bbR^n$ whose equations have the form $x_j=x_i+g$ for edges $ge_{ij}$ in $\Phi$.  See \cite[Section IV.4]{BG} or \cite{SOA} for more detail about this connection.  A most important point is that the characteristic polynomial of this arrangement, $p_\cA(q)$, equals the zero-free chromatic polynomial $\chi^*_\Phi(q)$, by \cite[Theorem III.5.2 and Corollary IV.4.5]{BG}.  Examples include the well known Shi, Linial, and Catalan arrangements, which we will define.  (In these definitions, $\bbZ$ could be replaced by any ordered abelian group, or a subgroup of the additive group of any field; for instance, the additive real numbers.)

\section{Catalan arrangements and their graphs}

We will now apply the preceding results to obtain relations between the special gain graphs corresponding to the Shi, Linial, and Catalan arrangements.  We begin with the last.

Let $C_n = \{0,\pm1\}K_n$, the complete graph $K_n$ (on vertex set $[n]$) with gains $-1$, $0$, and $1$ on every edge $ij$; we call this the \emph{Catalan graph} of order $n$, because the corresponding hyperplane arrangement is known as the Catalan arrangement, $\cC_n$.  
Let $C_n' = \{\pm1\}K_n$, the complete graph $K_n$ with gains $-1$ and $1$ on every edge $ij$; we call this the \emph{hollow Catalan graph}.  

Let $c(n,j)$ be the number of permutations of $[n]$ with $j$ cycles.  The \emph{Stirling number of the first kind} is $s(n,j) = (-1)^{n-j}c(n,j)$.  
The \emph{Stirling number of the second kind}, $S(n,j)$, is the number of partitions of $[n]$ into $j$ blocks.

\begin{prop}\mylabel{catalan}
Let $F$ be a weak chromatic function of integral gain graphs with the property of loop independence.  
Between the Catalan and hollow Catalan graphs we have the two relations
\[
F(C_n') = \sum_{j=1}^n S(n,j) F(C_j)
\]
and
\[
F(C_n) = \sum_{j=1}^n s(n,j) F(C_j') .
\]
\end{prop}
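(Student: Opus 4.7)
The approach is to apply Corollary~\ref{complete} to the hollow Catalan graph $C_n'$, which has no neutral edges. Since $(C_n')^0 = C_n' \cup 0K_n = \{0,\pm 1\}K_n = C_n$, the corollary gives two identities in $\CTN(\bbZ)$:
\[
C_n \;=\; \sum_{\pi \in \Pi_n} \mu(0, \pi)\,(C_n'/\pi), \qquad C_n' \;=\; \sum_{\pi \in \Pi_n} (C_n'/\pi)^0.
\]
Because $F$ satisfies the two defining laws of $\CTN(\bbZ)$ (neutral deletion-contraction and neutral-loop nullity), it descends to a homomorphism from that group, so both identities pass at once to identities among the values $F(\cdot)$.

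Next I would identify each contracted graph. Fix $\pi \in \Pi_n$ with $j := |\pi|$ blocks. Contracting $C_n'$ by $\pi$ collapses each block $B$ to a single block-vertex. Read in the gain-graph convention, the edges surviving between two distinct block-vertices consist of exactly one $+1$-edge and one $-1$-edge, so the non-loop part of $C_n'/\pi$ is a copy of $C_j'$. Within each block $B$ with $|B|\ge 2$, the intra-block edges of $C_n'$ contribute unbalanced loops of gain $\pm 1$; by the loop-independence hypothesis, removing them does not change $F$. Hence $F(C_n'/\pi) = F(C_j')$. The same analysis applied to $(C_n'/\pi)^0$ gives $F((C_n'/\pi)^0) = F(C_j)$, since adjoining all neutral links between the $j$ block-vertices converts the $C_j'$-part to $C_j$, and loop independence again strips the surviving unbalanced loops.

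Substituting these equalities and grouping the sums by $j=|\pi|$,
\[
F(C_n) \;=\; \sum_{j=1}^n \Bigl(\sum_{\pi:\,|\pi|=j} \mu(0, \pi)\Bigr) F(C_j'), \qquad F(C_n') \;=\; \sum_{j=1}^n \#\{\pi \in \Pi_n : |\pi|=j\}\,F(C_j).
\]
Two classical facts about $\Pi_n$ finish the proof: by definition $\#\{\pi : |\pi|=j\} = S(n,j)$, while the product formula $\mu(0,\pi) = \prod_B (-1)^{|B|-1}(|B|-1)!$ combined with the cycle-type count $c(n,j) = \sum_{|\pi|=j}\prod_B(|B|-1)!$ gives $\sum_{|\pi|=j}\mu(0,\pi) = (-1)^{n-j}c(n,j) = s(n,j)$.

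The step I expect to be the main obstacle is the identification in the second paragraph: one must verify that contracting $C_n'$ by $\pi$ and stripping nonneutral loops really leaves $C_j'$, rather than a graph carrying surplus parallel edges of equal gain. This rests on reading a gain graph as determined by its vertex set plus the gain set attached to each unordered vertex pair (so like-gain parallel edges coalesce) and on the self-symmetric structure $\{\pm 1\}K_n$ of $C_n'$. Once that identification is accepted, loop independence supplies the only nontrivial simplification needed beyond what $\CTN(\bbZ)$ already provides, and the Stirling-number identities close the argument.
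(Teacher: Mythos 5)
Your proof is correct and follows essentially the same route as the paper: apply Corollary~\ref{complete} to $C_n'$, identify $F(C_n'/\pi)$ with $F(C_{|\pi|}')$ (and the completed version with $F(C_{|\pi|})$) using loop independence plus simplification of parallel like-gain edges, then collect terms by $|\pi|$ and invoke the Stirling-number identities. Your explicit note that the identification relies on treating a gain graph as a vertex set together with a gain set per vertex pair is if anything a bit more careful than the paper, which simply asserts that multiple edges simplify because $F$ is a weak chromatic function.
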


\begin{proof}
The proof begins with Corollary \ref{complete}, which gives the identities
\begin{align*}
F(C_n) = \sum_{\pi\in\Pi_n}  \mu(0,\pi) F(C_n'/\pi) , \\
F(C_n') = \sum_{\pi\in\Pi_n}  F((C_n'/\pi)^0) .
\end{align*}
By the hypotheses on $F$, we can simplify the contractions.  Contraction by a partition introduces no new gains, but only loops and multiple edges with the same gains.  Multiple edges simplify without changing $F$ because $F$ is a weak chromatic function.  The loops can be deleted because $F$ is loop independent.  Therefore, $F(C_n'/\pi) = F(C_{|\pi|}')$ and $F((C_n'/\pi)^0) = F(C_{|\pi|})$.  It follows that 
\[
F(C_n) = \sum_{\pi\in\Pi_n}  \mu(0,\pi) F(C_{|\pi|}') = \sum_{j=1}^n s(n,j) F(C_j')
\]  
because $s(n,j) = \sum_{\pi\in\Pi_n: |\pi|=j} \mu(0,\pi),$ and
\[
F(C_n') = \sum_{\pi\in\Pi_n}  F(C_{|\pi|}) = \sum_{j=1}^n S(n,j) F(C_j).
\qquad\qedhere
\]
\end{proof}

Let $r_n$ be the number of regions in the Catalan arrangement $\cC_n$ and let $r'_n$ be the number of regions  of the arrangement corresponding to the hollow Catalan graph $C_j'$ (which Stanley calls the \emph{semiorder arrangement}).

\begin{cor}\mylabel{catalanregions}
$r_n = \sum_j c(n,j) r'_j$.
\end{cor}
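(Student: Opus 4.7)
The plan is to apply Proposition~\ref{catalan} to the zero-free chromatic polynomial $\chi^*$, and then convert the resulting chromatic identity into a region-counting identity via Zaslavsky's theorem on hyperplane arrangements.

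First, I would invoke Proposition~\ref{P:tcpoly}, which shows that $\tilde\chi_\Phi$ is a weak chromatic invariant; specializing $z=0$ shows the same for $\chi^*_\Phi$. Section~\ref{igg} already records that $\chi^*$ has the property of loop independence (an unbalanced loop contributes no balanced subsets in the sum \eqref{E:tcpoly}). Thus Proposition~\ref{catalan} applies to $F = \chi^*$, giving
\[
\chi^*_{C_n}(q) = \sum_{j=1}^{n} s(n,j)\,\chi^*_{C_j'}(q).
\]

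Next, I would use the identification of $\chi^*_\Phi$ with the characteristic polynomial of the associated affinographic arrangement (stated at the end of Section~\ref{igg}, from \cite[Cor.~IV.4.5]{BG}) together with Zaslavsky's region theorem, which says that an arrangement $\cA$ in $\bbR^n$ has $(-1)^n p_\cA(-1)$ regions. Applied to the Catalan and semiorder arrangements this gives
\[
r_n = (-1)^n \chi^*_{C_n}(-1), \qquad r_j' = (-1)^j \chi^*_{C_j'}(-1).
\]

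Finally, I would substitute $q = -1$ in the chromatic identity, multiply by $(-1)^n$, and regroup signs using $s(n,j) = (-1)^{n-j} c(n,j)$:
\[
r_n = (-1)^n \sum_{j=1}^n s(n,j)\,\chi^*_{C_j'}(-1) = \sum_{j=1}^n (-1)^{n-j} s(n,j)\cdot(-1)^j\chi^*_{C_j'}(-1) = \sum_{j=1}^n c(n,j)\,r_j'.
\]
There is no real obstacle here; the corollary is essentially a direct specialization of Proposition~\ref{catalan} combined with two standard facts (the characteristic-polynomial interpretation of $\chi^*$ and Zaslavsky's theorem). The only care needed is the sign bookkeeping that converts signed Stirling numbers to unsigned ones, which is handled by the factor $(-1)^{n-j}$ arising from $r_n$ versus $r_j'$ sitting in ambient spaces of different dimensions.
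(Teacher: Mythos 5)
Your proof is correct and follows essentially the same route as the paper's: apply Proposition~\ref{catalan} to $F=\chi^*$, evaluate at $q=-1$, use the region-count formula $(-1)^n\chi^*_\Phi(-1)$ from \cite[Cor.~IV.4.5(b)]{BG}, and convert signed to unsigned Stirling numbers via $s(n,j)=(-1)^{n-j}c(n,j)$. You are a bit more explicit than the paper in checking that $\chi^*$ satisfies the hypotheses of Proposition~\ref{catalan} and in writing out the sign bookkeeping, but the argument is the same.
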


\begin{proof}
The arrangement $\cH[\Phi]$ corresponding to an additive real gain graph of order $n$ has $(-1)^n\chi^*_\Phi(-1)$ regions \cite[Corollary IV.4.5(b)]{BG}.  Also, $(-1)^{n-j} s(n,j)$ is the number of $i$-cycle permutations.  
Applying the weak chromatic invariant $\chi^*$ to the second equation of Proposition \ref{catalan} and evaluating at $q=-1$, we get
$\chi^*_{C_n}(-1) = \sum_j s(n,j) \chi^*_{C'_j}(-1).$  
Since $s(n,j) = (-1)^{n-j} c(n,j)$, the equation follows.
\end{proof}

The integral, modular, and zero-free chromatic functions of $C_n$ are very simple to obtain.  The gains $0$ correspond to the condition that the colors of the vertices are all different.  The gains $-1$ and $1$ correspond to the condition that the colors of two vertices are never consecutive.  
Thus, the modular chromatic function of $C_n$ counts injections $f:V\rightarrow \bbZ_q$ such that no two values of $f$ are consecutive.  We repeat the well known evaluation \cite{Kaplansky}.  
If we shift $f$ so that $f(v_1)=0$ (thereby collapsing $q$ different injections together) and delete the successor of each value, we have an injection $\bar f:V\to \bbZ_{q-n}$ such that $\bar f(v_1)=0$, or equivalently, an arbitrary injection $\bar f': V \setm \{ v_1 \} \rightarrow \{2,\ldots, q-n\}$.  There are $(q-n-1)_{n-1}$ of these.  It follows that 
\begin{align*}
\chim_{C_n}(q) &= q (q-n-1)_{n-1} &\text{ for integers } q > n  
\intertext{(and it obviously equals $0$ for $q < 2n)$.  Similarly, the integral chromatic function is}
\chiZ_{C_n}(q) &= (q-n)_n &\text{ for integers } q \geq n
\intertext{(and $0$ for $q<2n$).  The zero-free chromatic polynomial is}
\chi^*_{C_n}(q) &= q (q-n-1)_{n-1} 
\end{align*}
by Lemma \ref{modchromatic} (or see \cite[Equation (11.1)]{PDS}, where $\chi^*_{C_n}(q)$ is called $\chib_{[-1,1]K_n}(q)$).

To obtain the various chromatic functions of the hollow Catalan graphs $C_n'$ directly is not as easy, but they follow from Proposition \ref{catalan}.  
We observed in Example \ref{X:intcf} that the integral chromatic function is a weak chromatic function and loop independent; therefore, 
\begin{align*}
\chiZ_{C_n'}(q) &= \sum_{j=1}^n S(n,j) (q-j)_j &\text{ for } q \geq n . \\
\intertext{Since the zero-free chromatic polynomial is a chromatic invariant with loop independence, } 
\chi^*_{C_n'}(q) &= q \sum_{j=1}^n S(n,j) (q-j-1)_{j-1} .
\intertext{Then the modular chromatic function follows by Lemma \ref{modchromatic} and the fact that ${\max}_{\odot}(C_n) = n$: }
\chim_{C_n'}(q) &= q \sum_{j=1}^n S(n,j) (q-j-1)_{j-1} &\text{ for } q > n .
\end{align*}
%

\section{Arrangements between Shi and Catalan}

The \emph{Shi graph} of order $n$ is $S_n = \{0,1\}\vec K_n$, i.e., the complete graph $K_n$ with gains $0$ and $1$ on every oriented edge $ij$ with $i<j$.  To have simple notation we take vertex set $[n]$ and we write all edges $ij$ with the assumption that $i<j$.

Let $G$ be a spanning subgraph of $K_n$, that is, it has all $n$ vertices; and define $SC(G)$ to be the gain graph $S_n \cup \{-1\}\vec G$, which consists of the complete graph $K_n$ with gains $0$ and $1$ on every edge $ij$, and also gain $-1$ if $ij \in E(G)$.  We call $SC(G)$ a graph \emph{between Shi and Catalan}.  If $G$ is edgeless we have the Shi graph $S_n$ and if $G$ is complete we have the Catalan graph $C_n$. We compute chromatic functions of these graphs between Shi and Catalan. 

Let us start with the case where $G$ has a unique edge $e_0=i_0j_0$ and compute the modular chromatic function for large $q$ (that is, the zero-free chromatic polynomial, by Lemma \ref{modchromatic}).  The gains $0$ correspond to the condition that the colors must be all different.  The gains $1$ on every edge correspond to the condition that if the color of $i$ immediately follows the color of $j$ then $i<j$.  Finally, the gain $-1$ on the edge $e_0$ implies that the color of $i_0$ cannot immediately follow the color of $j_0$.  
The number of modular colorations of our graph can be deduced by taking out of the list of proper $q$-colorations of $S_n$ the ones for which the colors of $i_0$ and $j_0$ are consecutive (in decreasing order).  The number of these equals $q/(q-1)$ times the number of proper  $(q-1)$-colorations of $S_{n-1}$, since we can simply remove $j_0$ and its color;  more precisely, we normalize the colorations so that $i_0$ has color $0$, thus $j_0$ has color $q-1$, and then convert by removing $j_0$ and $q-1$; $S_n$ becomes $S_{n-1}$ and $\bbZ_q$ becomes $\bbZ_{q-1}$.  
We conclude from the known value $\chi^*_{S_n}(q) =q(q-n)^{n-1}$ (\cite{Athan, Headley}; we reprove this soon) that the modular chromatic function of $SC(G)$ is $q[(q-n)^{n-1} - (q-n-1)^{n-1}]$.

This small example can make one feel the complexity of the computation in the case of a general graph $G$.  Nevertheless one can produce formulas.  

Let $G$ be a graph on vertex set $[n]$.  A \emph{descending path} in $G$ is a path $i_1i_2\cdots i_l$ such that $i_1>i_2>\cdots>i_l$.  Let $p_r(G)$ be the number of ways to partition the vertex set $[n]$ into $r$ blocks, each of which is the vertex set of a descending path.  We call such a partition a \emph{descending path partition} of $G$.

\begin{prop}\mylabel{L:SC}
Let $G$ be a simple graph with vertex set $[n]$.  The integral and modular chromatic functions and the zero-free chromatic polynomial of $SC(G)$ are given by
\begin{align*}
\chiZ_{SC(G)}(q) &= \sum_{r=1}^{n} p_r(G^c) (q-n+1)_r &\text{ for } q \geq n-1 , \\
\chim_{SC(G)}(q) &= q \sum_{r=1}^{n} p_r(G^c) (q-n-1)_{r-1} &\text{ for } q > n , \\
\chi^*_{SC(G)}(q) &= q \sum_{r=1}^{n} p_r(G^c) (q-n-1)_{r-1} .
\end{align*}
For the Shi graph in particular, 
\begin{align*}
\chiZ_{S_n}(q) &= (q-n+1)^n &\text{ for } q \geq n-1 , \\
\chim_{S_n}(q) &= q (q-n)^{n-1} &\text{ for } q > n , \\
\chi^*_{S_n}(q) &= q (q-n)^{n-1} .
\end{align*}
\end{prop}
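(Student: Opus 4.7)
The plan is to classify proper colorations of $SC(G)$ via the ``color block'' structure they induce. Given a proper $\bbZ$-coloration $\kappa$, the used colors partition into maximal runs of consecutive integers, which pull back under $\kappa$ to disjoint vertex sets called color blocks. The gain-$0$ edges force all $n$ colors distinct; the gain-$1$ edges of $\vec K_n$ force that within each block, listing vertices $w_1,\dots,w_k$ by increasing color yields strictly decreasing indices $w_1>\cdots>w_k$; and the gain-$(-1)$ edges from $G$ force the further constraint that each consecutive pair $w_{t+1}w_t$ is \emph{not} an edge of $G$, i.e., is an edge of $G^c$. Hence each block is the vertex set of a descending path in $G^c$, and conversely every descending-path partition of $[n]$ in $G^c$ arises this way. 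This correspondence is the engine of the proof.

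For the integral chromatic function, fix a descending-path partition $P=\{B_1,\dots,B_r\}$ of $G^c$. Colorations realizing $P$ are obtained by choosing (a) a bijection from the $r$ blocks to the $r$ color arcs taken in left-to-right order on $[q]$ ($r!$ options), and (b) starting positions $a_1<\cdots<a_r$ on $[q]$ with $a_{i+1}-a_i\geq|B_{\sigma(i)}|+1$ and $a_r+|B_{\sigma(r)}|-1\leq q$. The substitution $a_i'=a_i-\sum_{j<i}|B_{\sigma(j)}|-(i-1)$ turns the constraints into $1\leq a_1'\leq\cdots\leq a_r'\leq q-n-r+2$, counted by $\binom{q-n+1}{r}$ independently of the block sizes, so each partition contributes $r!\binom{q-n+1}{r}=(q-n+1)_r$. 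Summing over $P$ gives the claimed formula. The validity range $q\geq n-1$ is then confirmed by checking that $(q-n+1)_r$ vanishes at $q=n-1$, matching the true count of $0$, while the formula can go negative for smaller $q$ where no proper coloration exists.

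For the modular case the argument is analogous but on the cycle $\bbZ_q$. For a partition $P$ with $r$ blocks I would parameterize placements by a linear ordering of the blocks, a starting position on $\bbZ_q$ of the first block, and a composition of the gap length $q-n$ into $r$ positive parts, giving $r!\cdot q\cdot\binom{q-n-1}{r-1}$ labeled triples. Each geometric placement is hit $r$ times by cyclic rotations of the linear ordering, so the correct count per partition is $q(r-1)!\binom{q-n-1}{r-1}=q(q-n-1)_{r-1}$, valid for $q>n$ so that the maximal-arc decomposition on $\bbZ_q$ is unambiguous. For the zero-free chromatic polynomial I would note that $SC(G)$ is a subgraph of $C_n$, so ${\max}_{\odot}(SC(G))\leq n$; Lemma~\ref{modchromatic} then gives $\chi^*_{SC(G)}(q)=\chim_{SC(G)}(q)$ at all integers $q>n$, and agreement of the two polynomials on infinitely many integers forces equality as polynomials.

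Finally, for the Shi graph $S_n=SC(G)$ with $G$ edgeless, the complement $G^c=K_n$ makes every decreasing sequence a valid descending path, so $p_r(K_n)=S(n,r)$. The closed-form Shi formulas then reduce to the classical identities $\sum_r S(n,r)(x)_r=x^n$ (with $x=q-n+1$) and its companion $\sum_{r\geq 1}S(n,r)(x)_{r-1}=(x+1)^{n-1}$ (obtained from $(x+1)_r=(x+1)(x)_{r-1}$ and re-indexing), evaluated at $x=q-n-1$. The main technical obstacle, I expect, is the modular count: one must correctly handle the cyclic overcounting factor of $r$ and verify that for $q>n$ the maximal-arc decomposition on $\bbZ_q$ is uniquely defined. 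Everything else amounts to unpacking the gain-edge constraints and checking the boundary values of $q$.
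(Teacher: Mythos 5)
Your proof is correct and takes essentially the same route as the paper: both arguments hinge on the bijection between proper colorations and descending-path partitions of $G^c$ together with a placement of the $r$ paths into the gaps formed by the $q-n$ unused colors, giving $(q-n+1)_r$ placements per partition in the integral (linear) case and $q(q-n-1)_{r-1}$ in the modular (cyclic) case. The only cosmetic difference is in how you break cyclic symmetry for the modular count (dividing by $r$ for rotations of the chosen ``first block'' versus the paper's device of pinning the block containing vertex $1$), and you make explicit the Lemma~\ref{modchromatic} step for $\chi^*$ that the paper leaves implicit; both handle the Shi specialization via $p_r(K_n)=S(n,r)$ and the standard Stirling identities.
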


As we mentioned, the formula for $\chi^*_{S_n}(q)$ is already known from Athanasiadis', Head\-ley's, and Postnikov--Stanley's several computations of the characteristic polynomial of the Shi arrangement (\cite[Theorem 3.3]{Athan}, \cite{Headley}, and \cite[Example 9.10.1]{PS}), since the two polynomials are equal, as we remarked in Section \ref{igg}.

\begin{proof} 
We count proper colorations, extending the method Athanasiadis used to compute $\chi_{S_n}^{mod}(q)$ (\cite{Athan}, as reinterpreted in \cite[Section 6]{SOA}). 

Our first remarks apply both to integral and modular coloring.  The gains on the edges correspond to relations on the colors of the vertices.  
The 0-edges prevent coloring two different vertices with the same color.
The 1-edges imply that if two vertices are colored with consecutive colors then the larger vertex has the
first color.  This gives a nice way to describe permissible colorations in $q$ colors.  

Since all colors used are different we know that there are exactly $q-n$ colors not used (we must assume that $n\leq q$).
Imagine these colors lined up in circular or linear order, depending on whether we are evaluating $\chim$ or $\chiZ$.  We now need to arrange the vertices in the spaces between these unused colors.   When we place some vertices in the same space they are in descending order, so their places are compelled by their labels.  
Therefore, all we have to do for the Shi graph, where there are no $-1$-edges, is to assign each of $n$ vertices to a space between the $q-n$ unused vertices.  This is the classical problem of placing labelled objects into labelled boxes.  There are $q-n$ boxes in the modular case and $q-n+1$ in the integral case.  In the modular case, we assign vertex $1$ to box $0$; the other $n-1$ vertices can be placed arbitrarily.  
Then after inserting the vertices we have a circular permutation of $q$ objects, which is isomorphic to $\bbZ_q$ in $q$ ways; this accounts for the extra factor of $q$ in the modular Shi formula.

For a gain graph between Shi and Catalan, the edges with gain $-1$ correspond to vertices $i<j$ that can be in the same box only if they are not consecutive amongst the vertices in the box; i.e., there must be present in the box at least one vertex $h$ satisfying $i<h<j$.  
More precisely, suppose the vertices in the box, in descending order, are $j_1, j_2, \ldots, j_l$.  Then no consecutive pair can be adjacent in $G$, or, to put it differently, $j_1j_2 \cdots j_l$ must be a path in $G^c$.

Now we count.  We first look at modular coloring.  Consider the colors not used to be null symbols labelled by $\bbZ_{q-n}$, i.e., these colors are cyclically ordered.  
To get the number of proper colorations, we choose a partition of $[n]$ into the vertex sets of $r$ descending paths, and then we place the $r$ paths, each one with its vertex set in descending order, into the $q-n$ spaces between the nulls.  Due to the cyclic symmetry we can fix the space before $0\in \bbZ_{q-n}$ to be the one where we put the path that contains vertex 1.  There are $(q-n-1)_{r-1}$ ways to place the other $r-1$ paths.  
Now we have a cyclic arrangement of $q$ objects, vertices and nulls.  This set is isomorphic to $\bbZ_q$ in $q$ different ways, each of which gives a different proper coloration of $SC(G)$.  We get for the modular chromatic polynomial
\begin{equation*}
q \sum_\cP (q-n-1)_{r-1},
\end{equation*}
summed over all descending path partitions $\cP$ of $G^c$, where $r$ is the number of paths in $\cP$.  Our description is meaningful so long as $q-n-1\geq 0$, since the largest possible number $r$ is $n$.

For integral coloring the technique is similar.  The nulls are linearly ordered, isomorphic to $[q-n]$, and there are $q-n+1$ boxes, i.e., spaces between and around them.  We get for the integral chromatic polynomial
\begin{equation*}
\sum_\cP (q-n+1)_r
\end{equation*}
because the $r$ paths can be placed in any distinct boxes. The computation applies as long as $q-n+1 \geq 0$.

We derived the Shi formulas by a direct calculation but it is easy to deduce them from the general descending-path formulas.  Since $G^c = K_n$, the descending-path-partition number $p_r(G^c)$ is just the number of partitions with $r$ blocks, that is, $S(n,r)$.  Then one can collapse the sums; e.g., for the zero-free chromatic polynomial, 
\[
\sum_{r=1}^{n} p_r(G^c) (q-n-1)_{r-1} = (q-n)\inv \sum_{r=1}^{n} S(n,r) (q-n)_{r} = (q-n)^{n-1} .	
\qquad\qedhere
\]
\end{proof}

When $G^c$ is the comparability graph $\Comp(P)$ of a partial ordering of $[n]$ that is compatible with the natural total ordering, i.e., such that $i <_P j$ implies $i < j$ (in $\bbZ$), a descending path is a chain, so $p_r(G^c)$ is the number of ways to partition the set $P$ into $r$ chains.

\medskip

We will now limit ourselves to the special case where $G$ is a graph of order $k$ constructed from a partition of $[n]$.  
Let $\pi$ partition $[n]$ into $k$ blocks $X_1,\ldots,X_k$, with the notation chosen so that $a_i :=\min(X_1)<a_2:=\min(X_2)<\cdots<a_k:=\min(X_k).$  Thus, $X_1$ contains $1$, $X_2$ contains the smallest element not in $X_1$, and so on.  
The blocks are naturally partially ordered by letting $X_i<X_j$ if $c<d$ for every $c\in X_i$ and $d\in
X_j$; in other words, if $b_i:=\max X_i < a_j=\min X_j$.  This partial ordering induces a partial order $P_\pi$ on $[k]$.
We say $X_i$ and $X_j$ \emph{overlap} if neither $X_i< X_j$ nor $X_i>X_j$.  

Let $\G_\pi$ be the interval graph of the intervals $[a_i,b_i]$ for $i\in[k]$.  
(See \cite{Golumbic} for the many interesting properties of interval graphs.)  
Then $\G_\pi$ has an edge $ij$ just when $X_i$ overlaps $X_j$, so its complement is the comparability graph $\Comp(P_\pi)$.  
The integral and modular chromatic functions of the gain graph $SC(\G_\pi)$ for the partition $\pi$ can be obtained directly in terms of $\G_\pi$.  
Let $d_i=d_i(\pi)$ be the \emph{lower degree} of $i$ in $\G_\pi$, i.e., the number of blocks $X_j$ overlapping $X_i$ and having $j<i$; note that $d_1=0$.

\begin{thm}\mylabel{SC}
Let $\pi$ be a partition of $[n]$ into $k$ blocks $X_1, \ldots, X_k$, and let $SC(\G_\pi)$ be the corresponding gain graph (of order $k$) between Shi and Catalan.  The integral and modular chromatic functions and the zero-free chromatic polynomial of $SC(\G_\pi)$ are:
\begin{align*}
\chiZ_{SC(\G_\pi)}(q) &= (q-k+1)\prod_{i=2}^k (q-k+1-d_i) &\text{for } q \geq k-1+\max d_i, \\
\chim_{SC(\G_\pi)}(q) &= q\prod_{i=2}^k (q-k-d_i) &\text{for } q \geq k+\max d_i, \\
\chi^*_{SC(\G_\pi)}(q) &= q\prod_{i=2}^k (q-k-d_i) .
\end{align*}
\end{thm}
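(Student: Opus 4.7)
The plan is to derive each of the three formulas from Proposition~\ref{L:SC} by evaluating the descending-path sum $\sum_r p_r(\G_\pi^c)(x)_r$ as the ordinary chromatic polynomial of the interval graph $\G_\pi$, and then factoring that chromatic polynomial using the natural simplicial elimination ordering $1,2,\ldots,k$ of its vertices.

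Since $SC(\G_\pi)$ has exactly $k$ vertices, Proposition~\ref{L:SC} (with $k$ playing the role of its ``$n$'') gives
\[
\chiZ_{SC(\G_\pi)}(q) = \sum_{r=1}^{k} p_r(\G_\pi^c)(q-k+1)_r, \qquad
\chi^*_{SC(\G_\pi)}(q) = q\sum_{r=1}^{k} p_r(\G_\pi^c)(q-k-1)_{r-1}.
\]
Because $\G_\pi^c = \Comp(P_\pi)$ and $P_\pi$ is compatible with the natural order, the remark preceding the theorem identifies descending paths of $\G_\pi^c$ with chains of $P_\pi$, so $p_r(\G_\pi^c)$ equals the number of partitions of $P_\pi$ into $r$ chains; these are exactly the stable partitions of $\G_\pi$ with $r$ blocks. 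Combining this with the second identity of Lemma~\ref{graph} and $\chi_{K_{\pi'}}(x)=(x)_{|\pi'|}$ yields the polynomial identity
\[
\sum_{r=1}^{k} p_r(\G_\pi^c)\,(x)_r = \sum_{\pi'\in\Pi^*(\G_\pi)} (x)_{|\pi'|} = \chi_{\G_\pi}(x).
\]

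The heart of the argument is the explicit factorization $\chi_{\G_\pi}(x) = \prod_{i=1}^k (x - d_i)$. I would prove this by showing that for every $i$, the lower neighborhood $N^-(i)=\{j<i : X_j \text{ overlaps } X_i\}$ is a clique in $\G_\pi$. If $j_1 < j_2 < i$ both overlap $X_i$, then $b_{j_1} \geq a_i > a_{j_2}$, so the intervals $[a_{j_1},b_{j_1}]$ and $[a_{j_2},b_{j_2}]$ meet and $X_{j_1}$ overlaps $X_{j_2}$. Consequently, greedy coloring the vertices in the order $1, 2, \ldots, k$ with $x$ colors forbids exactly $d_i$ pairwise-distinct colors at vertex $i$, giving $\prod_{i=1}^k (x - d_i)$ proper colorings.

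Substituting $x = q - k + 1$ and using $d_1 = 0$ to split off the leading factor yields the integral formula. For the zero-free formula, the identity $(q-k)_r = (q-k)(q-k-1)_{r-1}$ recasts the Proposition~\ref{L:SC} sum as $q/(q-k)$ times $\chi_{\G_\pi}(q-k)$; the factor $q - k = q - k - d_1$ already present in the product cancels the denominator, yielding $q\prod_{i=2}^k (q-k-d_i)$ as a polynomial identity valid for all $q$. The modular formula then follows from the zero-free one by Lemma~\ref{modchromatic}, since the hypothesis $q \geq k + \max d_i$ easily exceeds ${\max}_{\odot}(SC(\G_\pi))$ (which is bounded by $k$). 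The only nontrivial ingredient is the clique property of the lower neighborhoods in $\G_\pi$, and that obstacle is cleared by the one-line interval inequality above.
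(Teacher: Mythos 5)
Your proof is correct, and while it is closest in spirit to the paper's proof by deduction from Proposition~\ref{L:SC}, it organizes the argument in a genuinely different and arguably cleaner way. The paper gives two proofs: a direct one that counts proper colorations of $SC(\G_\pi)$ greedily, block by block (using exactly your observation that blocks overlapping $X_i$ on the left pairwise overlap one another), and a deduction from Proposition~\ref{L:SC} that proceeds by induction on $k$, establishing the recurrence $p_r(\G_\pi^c) = p_{r-1}(\G_\pi^c\setm k) + (r-d_k)\,p_r(\G_\pi^c\setm k)$ from the simplicial vertex ordering of $\G_\pi$ and then pushing it through the descending-path sums. Your argument rests on the very same structural fact --- your one-line interval inequality is exactly what makes the natural order a simplicial vertex ordering of $\G_\pi$ and the paper notes it in the same words --- but you package it differently: you first recognize that $\sum_r p_r(\G_\pi^c)(x)_r$ is the falling-factorial expansion of the ordinary chromatic polynomial $\chi_{\G_\pi}(x)$ (via the identification of descending-path partitions with chain decompositions with stable partitions, and the second identity of Lemma~\ref{graph}), and then factor $\chi_{\G_\pi}(x)=\prod_i(x-d_i)$ in one shot by the simplicial elimination ordering. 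This reinterpretation, which the paper does not make explicit, dispenses with the induction and the $p_r$ recurrence, gives a conceptual reason for the product shape of the answer (the chromatic polynomial of a chordal graph factors over a perfect elimination ordering), and handles all three chromatic functions at once, with only the substitution and the $d_1=0$ bookkeeping differing. Your handling of the zero-free case via $(q-k)_r=(q-k)(q-k-1)_{r-1}$ and the cancellation of the temporary $q-k$ denominator is fine as a polynomial identity, and the bound $\max_\odot(SC(\G_\pi))\le k$, needed to invoke Lemma~\ref{modchromatic} under the stated hypothesis $q\ge k+\max d_i$, is checked correctly (note that when $\max d_i=0$ the graph is the Shi graph, whose maximum circle gain is only $k-1$, so the hypothesis $q\ge k$ still suffices).
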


\begin{proof}[Direct Proof]
Again we extend the method of Athanasiadis \cite{Athan} used to compute $\chi^*_{S_n}(q)$ \cite{Athan}, but slightly diferently from before.

We first look at modular coloring.  To get the number of proper colorations, we choose the colors of the vertices in increasing order.  To color the vertex $X_1$, we have $q$ choices.  For $X_2$ we have $q-k$ choices if $X_1$ and $X_2$ do not overlap (which corresponds to the presence of a $-1$ edge) and only $q-k-1$ choices otherwise.  We go on, and when coloring $X_i$ we have $q-k-d_i$ choices (we use the fact that two blocks overlapping with $X_i$ must overlap each other).  We get for the modular chromatic polynomial 
\[
q \prod_{i=2}^k (q-k-d_i).
\]
The lower bound on $q$ arises from the fact that $q-k-d_i$ must never be negative if the reasoning is to hold good.

For integral coloring the technique is similar.  To color $X_1$ we have $q-k+1$ choices, which is the number of boxes.  To color $X_i$ we have $q-k+1-d_i$ choices, the number of boxes minus the number of forbidden boxes.  We get for the integral chromatic polynomial
\[
(q-k+1) \prod_{i=2}^k (q-k+1-d_i) = \prod_{i=1}^k (q-k+1-d_i) .
\qquad\qedhere
\]
\end{proof}

\begin{proof}[Deduction from Proposition \ref{L:SC}]
A \emph{simplicial vertex ordering} in a graph $G$ is a numbering of the vertices by $1,2, \ldots, k$ such that, for each $r$, in the subgraph $G_r$ induced by the vertices $1,\ldots, r$ the neighborhood of $r$ is a clique (see, for instance, \cite{Golumbic}).  In $G=\G_\pi$ it is easy to see that the natural ordering of $[n]$ is a simplicial vertex ordering and the number of neighbors of $r$ in $G_r$ is the lower degree $d_r$.  A descending path is a chain in $P_\pi$.

We apply Proposition \ref{L:SC} inductively, leaving the easy case $k=1$ to the reader.  Suppose it is true for $P_\pi \setm k$.  A chain decomposition of $P_\pi$ is obtained by taking an $r$-chain decomposition of $P_\pi \setm k$ and adjoining $k$ in either of two ways: we can add a new chain $\{ k \}$, or we can add $k$ to an existing chain $i_1 > \cdots > i_l$, necessarily at the top.  This is possible if and only if $k > i_1$ in $P_\pi$.  Since the non-neighbors of $k$ form a clique, they are mutually incomparable.  
Thus, each one is in a separate chain.  Each must be the top element of its chain because otherwise the top element would be $< k$ and the non-neighbor would also be $< k$ by transitivity.  It follows that the number of chains to which $k$ can be added is $r-d_k$.  We conclude that 
\begin{equation*}
p_r(\G_\pi^c) = p_{r-1}(\G_\pi^c \setm k) + (r-d_k)p_r(\G_\pi^c \setm k).
\end{equation*}

Now we compute the value of the right-hand side of an expression in Proposition \ref{L:SC}; we do the integral chromatic function, the others being similar. From the lemma,
\begin{align*}
\chiZ_{SC(\G_\pi)}(q) &= \sum_{r=1}^k p_r(\G_\pi^c) (q-k+1)_r \\
  &= \sum_{r=2}^k p_{r-1}(\G_\pi^c \setm k) (q-k+1)_{r-1} (q-k-[r-2]) \\
  &\quad + \sum_{r=1}^{k-1} (r-d_k) p_r(\G_\pi^c \setm k) (q-k+1)_r 
\intertext{because $p_0(\G_\pi^c \setm k) = p_k(\G_\pi^c \setm k) = 0$,}
  &= (q-k+1-d_k) \sum_{r=1}^{k-1} p_r(\G_\pi^c \setm k) (q-k+1)_r \\
  &= (q-k+1-d_k) \chiZ_{SC(\G_\pi \setm k)}(q-1) \\
  &= (q-k+1-d_k) \prod_{i=1}^{k-1} ([q-1] - [k-1] + 1 - d_i) \\
\end{align*}
by induction.  This is the desired formula.
\end{proof}

\section{Linial arrangements}

The \emph{Linial graph} of order $n$ is $L_n = 1\vec K_n$, i.e., the complete graph $K_n$ with gains 1 on every oriented edge $ij$ (that is, with $i<j$).  We found the integral chromatic function of the Linial graph in \cite{SOA}, but here we have a new and different formula that also gives the modular and zero-free chromatic functions.  A corollary is a new formula for the characteristic polynomial of the Linial hyperplane arrangement, different from (though not as simple as) that of Athanasiadis \cite[Theorem 4.2]{Athan} (differently proved in \cite[Example 9.10.3]{PS}).

We expand the Linial gain graph in gain graphs between Shi and Catalan.  
The chromatic functions we are calculating are invariant under simplification that does not remove neutral loops and are not affected by non-neutral loops.

\begin{cor}\mylabel{linialexp}
Let $F$ be a weak chromatic function of integral gain graphs which is loop independent.  Then
\[
F(L_n) = \sum_{\pi\in \Pi_n} F(SC(\G_\pi)) .
\]
\end{cor}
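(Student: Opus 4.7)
The plan is to derive the corollary by specializing Corollary~\ref{complete} to the Linial graph and then identifying each contraction with the advertised gain graph $SC(\G_\pi)$. Since every edge of $L_n$ carries gain $+1$, the Linial graph has no neutral edges, so the second identity of Corollary~\ref{complete} applies directly and gives, in $\CTN(\bbZ)$,
\[
L_n = \sum_{\pi \in \Pi_n} (L_n/\pi)^0.
\]
Applying $F$ yields $F(L_n) = \sum_\pi F((L_n/\pi)^0)$, reducing the problem to the pointwise identity $F((L_n/\pi)^0) = F(SC(\G_\pi))$ for every $\pi \in \Pi_n$.

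To establish this identity, fix $\pi = \{X_1,\ldots,X_k\}$ labeled so that $\min X_1 < \cdots < \min X_k$. The gain graph $(L_n/\pi)^0$ has vertex set $[k]$ and carries three kinds of edges: a gain-$0$ complete graph contributed by the operation $(\cdot)^0$; a gain-$+1$ loop at vertex $a$ for each edge of $L_n$ internal to $X_a$; and, for each edge $ij$ of $L_n$ with $i<j$, $i \in X_a$, $j \in X_b$, $a \neq b$, a gain-$+1$ edge oriented from $a$ to $b$, possibly parallel to other such edges in either direction. The loops are non-neutral, so loop independence discards them, and the hypotheses on $F$ permit simplification of parallel edges of equal gain, just as in the proof of Proposition~\ref{catalan}. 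A case analysis on an ordered pair $a<b$ shows that, after simplification, a gain-$+1$ edge from $a$ to $b$ always survives (witnessed by the pair $\min X_a, \min X_b$), while a gain-$+1$ edge from $b$ to $a$---equivalently, a gain-$(-1)$ edge from $a$ to $b$---survives if and only if some $i \in X_b$ is less than some $j \in X_a$, i.e., if and only if $X_a$ and $X_b$ overlap, i.e., if and only if $ab \in E(\G_\pi)$. Together with the gain-$0$ edges on every pair, this is precisely the description of $SC(\G_\pi) = S_k \cup \{-1\}\vec\G_\pi$.

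The one delicate point is the simplification of parallel non-neutral edges of equal gain; this is not immediate from neutral deletion-contraction and neutral-loop nullity alone, but it is the same simplification invoked in the proof of Proposition~\ref{catalan} under identical hypotheses, and it is immediate for the integral, modular, and zero-free chromatic functions, since parallel edges of identical gain impose identical color constraints. Once that simplification is granted, the identification above delivers $F((L_n/\pi)^0) = F(SC(\G_\pi))$ for every $\pi$, and summing over $\Pi_n$ completes the proof.
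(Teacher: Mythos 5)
Your proof is correct and is essentially the same as the paper's: both expand $L_n$ via the second identity of Corollary~\ref{complete} (the paper writes the result as $\sum_{S\in\cF(\G_0)}S_n/S$, which is the same as your $\sum_\pi(L_n/\pi)^0$ after neutral-edge simplification), then identify each summand with $SC(\G_\pi)$ by discarding $1$-loops via loop independence and simplifying parallel edges. The ``delicate point'' you flag about simplifying parallel non-neutral edges of equal gain is present verbatim in the paper's proof as well (``We can simplify multiple edges with the same gain, by definition of $F$''); the authors are implicitly reading invariance under simplification into ``weak chromatic function,'' which holds for all the functions to which the corollary is applied.
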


\begin{proof}
A straightforward application of Corollary \ref{complete} shows that
\[
L_n = \sum_{S\in \cF(\G_0)} S_n/S.
\]
The flat $S$ corresponds to a partition $\pi$; it is the union of neutral cliques on the blocks of $\pi$.  The edges that remain after contraction are the neutral edges, which connect all the blocks of $\pi$ forming a complete neutral subgraph of $S_n/S$, and the contractions of $1$-edges.  
Number the blocks by least element and partially order as in the previous section.  If $i<j$, then there is a $1$-edge from $X_i$ to $X_j$ in the contraction. 
If furthermore $X_i < X_j$ in $P_\pi$, there is a $(-1)$-edge in the rising direction.  
We can simplify multiple edges with the same gain, by definition of $F$.  
We can ignore neutral loops because contracting $S$ leaves none.  Contraction makes a $1$-loop at each contraction vertex $X\in\pi$ that has a $1$-edge, but these do not affect the value of $F$.  
Therefore, $F(S_n/S) = F(SC(\G_\pi))$.
\end{proof}

We get the next result from Theorem \ref{SC} and Corollary \ref{linialexp}.

\begin{thm}\mylabel{linialpolys}
The integral and modular chromatic functions of the Linial gain graph satisfy
\begin{align*}
\chiZ_{L_n}(q) &= \sum_{\pi\in\Pi_n}  (q-k+1) \prod^k_{i=2} (q-k+1-d_i) &\text{ for } q \geq n-1, \\
\chim_{L_n}(q) &= q \sum_{\pi\in\Pi_n}  \prod^k_{i=2} (q-k-d_i) &\text{ for } q \geq n, \\
\chi^*_{L_n}(q) &= q \sum_{\pi\in\Pi_n}  \prod^k_{i=2} (q-k-d_i) ,
\end{align*}
respectively, where $k = |\pi|$ and $d_i$ is the lower degree $d_i(\pi)$.
\end{thm}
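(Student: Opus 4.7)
The plan is to derive all three formulas by combining Corollary \ref{linialexp} with Theorem \ref{SC}, applied respectively to the three chromatic functions $\chiZ$, $\chim$, and $\chi^*$. First I would observe that each of these three functions is a weak chromatic function of integral gain graphs with the loop-independence property: $\chiZ$ by Example \ref{X:intcf}, $\chim$ by Example \ref{X:modcf}, and $\chi^*$ since it is a weak chromatic invariant (Proposition \ref{P:tcpoly}) and is loop independent because a nonneutral loop makes every summand in \eqref{E:tcpoly} vanish (any edge set containing the loop has an unbalanced component at that vertex). Consequently Corollary \ref{linialexp} applies to each of them, giving the identity $F(L_n) = \sum_{\pi\in\Pi_n} F(SC(\G_\pi))$ for $F \in \{\chiZ,\chim,\chi^*\}$.

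Next I would substitute the explicit formulas for $F(SC(\G_\pi))$ provided by Theorem \ref{SC}. For the zero-free chromatic polynomial this is a polynomial identity and no range restriction is needed, so substitution yields the third formula immediately. For the integral and modular cases, I must check that the bounds $q \geq n-1$ and $q \geq n$ stated in Theorem \ref{linialpolys} suffice to ensure that, for \emph{every} $\pi \in \Pi_n$ appearing in the sum, the corresponding hypotheses $q \geq k-1+\max_i d_i(\pi)$ and $q \geq k+\max_i d_i(\pi)$ of Theorem \ref{SC} are met.

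This reduces to the combinatorial estimate $k + \max_i d_i(\pi) \leq n$ for every partition $\pi$ of $[n]$ into $k$ blocks, and this is where a small argument is required. If the lower degree is maximized at index $i^*$, then each of the $d_{i^*}$ lower-indexed blocks $X_j$ overlapping $X_{i^*}$ must contain an element below $a_{i^*}$ and (since $b_j > a_{i^*}$) an element strictly above $a_{i^*}$, hence has at least two elements; the block $X_{i^*}$ contributes at least one more element; and the remaining $k - d_{i^*} - 1$ blocks contribute at least one element each. Summing gives
\[
n \;\geq\; 2d_{i^*} + 1 + (k - d_{i^*} - 1) \;=\; k + d_{i^*},
\]
which is the desired inequality.

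With that inequality in hand, the integral and modular formulas follow at once by term-by-term substitution into the Corollary \ref{linialexp} expansion. I expect the bound verification in the third paragraph to be the only nontrivial step; everything else is mechanical assembly of results already in hand.
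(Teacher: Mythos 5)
Your proof is correct and follows the same macro-route as the paper (Corollary \ref{linialexp} combined with Theorem \ref{SC}), but your treatment of the range restrictions is genuinely different and in fact more careful.  The paper handles the modular bound by invoking Lemma \ref{modchromatic} on the gain graph with the largest circle gain (the worst case is the Shi graph $S_n$, giving $q > n-1$), and for the integral bound it simply cites ``the obvious necessity that $q-|\pi|+1\geq 0$,'' which ensures $q\geq n-1$ but does not by itself verify that $q\geq n-1$ suffices to meet the per-partition hypothesis $q\geq k-1+\max_i d_i(\pi)$ of Theorem \ref{SC} for \emph{every} $\pi$.  Your route supplies exactly the missing piece: the combinatorial inequality $k+\max_i d_i(\pi)\leq n$, proved by counting at least two elements in each of the $d_{i^*}$ lower-indexed blocks that overlap a block of maximal lower degree and at least one element in each remaining block.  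That inequality handles both the integral and modular cases uniformly (it gives $k-1+\max d_i\leq n-1$ and $k+\max d_i\leq n$ simultaneously), is independent of Lemma \ref{modchromatic}, and is tight for the singleton partition, so the stated bounds cannot be improved by this method.  Your verification that $\chi^*$ is loop independent --- a nonneutral loop forces $c(S)>b(S)$ for every $S$ containing it, killing those terms in \eqref{E:tcpoly} at $z=0$ --- is also correct, though one could alternatively deduce it from Proposition \ref{P:tcpoly} together with deletion-contraction on the loop's underlying digon.  In short, same decomposition, but your bound argument is tighter-reasoned and arguably the one the paper should have spelled out.
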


\begin{proof}
We get the lower bound on $q$ in the modular polynomial from Lemma \ref{modchromatic}, since the circle with maximum gain is $12 \cdots n1$, using $1$-edges in the upward direction and the $0$-edge $n1$.  The gain is $(n-1)1+0$.

In the integral case the lower bound follows from the obvious necessity that $q-|\pi|+1\geq 0$ for every partition.
\end{proof}

The zero-free chromatic polynomial, by a remark in Section \ref{igg}, is also the characteristic polynomial of the Linial arrangement $\cL_n$; thus,
\[
p_{\cL_n}(q) = q \sum_{\pi\in\Pi_n}  \prod^k_{i=2} (q-k-d_i) .
\]
This new formula contrasts with that of Athanasiadis:  
\[
p_{\cL_n}(q) = \frac{q}{2} \sum_{j=0}^n \binom{n}{j} \left(\frac{q-j}{2}\right)^{n-1} .
\]
Interesting enumerative conclusions might follow from the equality of these two expressions for the Linial polynomial, but that is too complicated to pursue here.  At any rate, our formula has an interesting combinatorial aspect.

\begin{example}\mylabel{X:linial}
Let $n=6$ and $\pi=\{13,25,46\}$, so $k=3$.  
Then $\G_\pi$ has edges $12$ and $23$ but not $13$.  We have $d_1=0, d_2=d_3=1$.  
The formula for integral colorations is $(q-2)(q-3)^2$, which gives two proper colorations for $q=4$.  
The colorations are given by the sequences $31o2$ and $2o31$, where the color corresponds to the position in the sequence, the numbers denote the vertices, and $o$ denotes an unused color.  
Thus in the first sequence vertex $3$ is colored $1$, vertex $1$ has color $2$, color $3$ is not used, etc.  The coloration is built up by taking $q-n=1$ unused color, forming the sequence $o$, which makes two boxes to place descending paths in $\G_pi^c$, and placing the descending paths $31$ and $2$ in the first and second boxes, respectively.  
\end{example}

We see that different partitions $\pi$ can give the same terms because the term of a partition depends only on the lower degrees.  We would like to know, for a given nondecreasing sequence $D = (d_1=0,d_2,\ldots, d_k)$, the number $N_1(D)$ of partitions of $[n]$ which correspond to this sequence.  
Knowing these numbers will give us formulas like
\[
\chim_{L_n}(q) = q \sum_{D} N_1(D) \prod^k_{i=2}(q-k-d_i).
\]
(In fact, although each term of this sum depends only on the elements of the sequence and not on their order, we would be pleased to know also the number $N_2(D)$ of naturally ordered partitions of $[n]$ which correspond to an arbitrary ordered sequence $D$.)

Define $D_1(\pi)$ to be the \emph{increasing lower degree sequence} of $\G_\pi$, which is the sequence of lower degrees written in non-decreasing order; and let $D_2(\pi)$ be the sequence of lower degrees in \emph{vertex order}, i.e., where $d_i=d_i(\pi)=$ the lower degree of $X_i$.  To recognize these sequences is not difficult.   
Call an \emph{ascent} of a sequence $D=(d_1,d_2,\ldots,d_k)$ any position $i \in [k-1]$ such that $d_{i+1}>d_i$.  The \emph{ascent set} is $A(D) := \{ i \in [k-1] : d_{i+1}>d_i \}.$

\begin{prop}\mylabel{lds}
A sequence $D=(d_1,\ldots,d_k)$ is the vertex-order lower degree sequence $D_2(\pi)$ of the overlap graph $\G_\pi$ for some $\pi\in\Pi_n$ if and only if 
\begin{gather*}
d_1=0\leq d_2, \ldots, d_k, \\
d_{i+1} \leq d_i+1 \text{ for every } i\in[k-1], \text{ and} \\
n \ge k + \text{the number of ascents of } D .
\end{gather*}
\end{prop}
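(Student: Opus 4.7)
My plan is to prove necessity of the three conditions by short observations about the overlap relation, and sufficiency by an explicit construction.

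For necessity, the key observation is that two blocks $X_j, X_i$ with $a_j < a_i$ overlap iff $b_j \ge a_i$. Condition (i) is then immediate. For (ii), any $X_j$ with $j < i$ contributing to $d_{i+1}$ satisfies $b_j \ge a_{i+1} > a_i$ and hence also contributes to $d_i$; so $d_{i+1}$ exceeds $d_i$ by at most $1$, the potential extra coming only when $X_i$ itself overlaps $X_{i+1}$. In the equality (ascent) case, this forces $b_i \ge a_{i+1} > a_i$, so $|X_i| \ge 2$. Summing block sizes then yields $n = \sum_i |X_i| \ge (k - \#A(D)) + 2\#A(D) = k + \#A(D)$, which is condition (iii).

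For sufficiency, assume $D$ satisfies all three conditions and set $m = \#A(D)$. I would first build a minimal-support partition on $[k + m]$ realizing $D$. View the $k+m$ positions as a timeline of events, each of which is either a singleton block event (open and close simultaneously) or one member of a paired open/close for a non-singleton. Declare $X_i$ a singleton exactly when $i \notin A(D)$ and a non-singleton exactly when $i \in A(D)$, so the construction uses $(k - m) \cdot 1 + m \cdot 2 = k + m$ positions in all. Process $i = 1, 2, \ldots, k$ in order: place the opens in block-index order, and insert between $O_i$ and $O_{i+1}$ the prescribed number of non-singleton closes, which is $d_i + 1 - d_{i+1} = 0$ when $X_i$ is a non-singleton (immediate from $i \in A(D)$), and $d_i - d_{i+1} \ge 0$ when $X_i$ is a singleton (since $i \notin A(D)$ forces $d_{i+1} \le d_i$). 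At each close event, pop the most recently opened non-singleton (LIFO). For the general case $n > k + m$, adjoin the extra $n - k - m$ elements to $X_k$ at the tail positions $k + m + 1, \ldots, n$; because $b_k$ never appears in the definition of any $d_j$, this preserves $D_2$.

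The main obstacle is bookkeeping in the first stage: one must verify by induction on $i$ that just before each open $O_i$, the stack of currently-active non-singletons has exactly $d_i$ elements, so the prescribed number of closes is both non-negative and actually realizable from the stack. This follows from a telescoping calculation together with the ascent characterization; a secondary check, that the LIFO choice of which non-singleton to close is irrelevant to the resulting $d_j$'s, is immediate since $d_j$ is a count rather than a list of specific blocks.
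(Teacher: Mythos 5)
Your proof is correct, and with the LIFO pop discipline it constructs exactly the same partition as the paper's: doubletons at ascents, singletons elsewhere. Tracing positions in your timeline recovers the paper's closed forms $a_i = i + t_{i-1} - d_i$ and $b_i = m_i + t_{m_i} - d_i$, where $t_i = |A(D)\cap[i]|$ and $m_i = \min\{j\ge i: d_{j+1}\le d_i\}$. The difference is in the packaging: the paper writes those formulas down and verifies directly, by inequality manipulation, that the intervals $[a_i,b_i]$ tile $[k+|A(D)|]$ and that the upper neighborhood of $i$ in $\G_\pi$ is $[i+1,m_i]$; your stack/Dyck-path description makes that last fact conceptually transparent and buys the bonus observation, absent from the paper, that the LIFO choice is immaterial to the resulting degree sequence. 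Two details to fill in when writing this up: state explicitly that after $O_k$ you close the $d_k$ non-singletons still on the stack (your description only prescribes what to insert between consecutive opens, and checking this exhausts exactly $k+|A(D)|$ positions is needed to confirm $\pi$ partitions $[k+|A(D)|]$); and actually carry out the telescoping induction you allude to --- verifying that the prescribed number of closes between $O_i$ and $O_{i+1}$ is nonnegative and realizable is exactly where the hypotheses $d_1=0$, $d_{i+1}\le d_i+1$, and $n\ge k+|A(D)|$ enter.
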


\begin{proof}
In the proof let $\sim$ denote adjacency in $\G_\pi$.

First we prove the three conditions are necessary.  Let $\pi$ be any partition of $[n]$ into $k$ blocks; let $D = D_2(\pi)$.  It is clear that $D$ has the first property.  
For the second, let $j<i \leq k$.  If $X_j$ overlaps $X_{i+1}$ it also overlaps $X_i$; thus, if $j \sim i+1$ then $j \sim i$.  Consequently, $d_{i+1} \leq d_i+1$ and equality holds only if $a_{i+1} < b_i$.  The latter implies $b_i>a_i$.  We conclude that, in $D$, 
\begin{equation}\mylabel{longblock}
d_i<d_{i+1} \implies |X_i|>1.
\end{equation}
Consequently, every ascent of $D$ requires a block of size at least $2$; this gives the lower bound on $n$.

For sufficiency of the three conditions we assume a sequence $D$ has the three properties of the proposition.  We construct a partition $\pi \in \Pi_{n_D}$ that realizes $D$, where $n_D := k + |A(D)|$.  (For $n>n_D$ we simply add $n_D+1,\ldots,n$ to the block of $\pi$ that contains $n_D$.)  

Define $d_0,\ d_{k+1} := 0$ and put $[i,j] := \{i,i+1,\ldots,j\}$ if $i\leq j$.  
Now, let $X_i = \{a_i,b_i\}$ where
\begin{align*}
m_i &:= \min\{ j \geq i : d_{j+1} \leq d_i \}, \\
t_i &:=  |A(D) \cap [i]|, \\
a_i &:= i + t_{i-1} - d_i , \\
b_i &:= a_i + (m_i-i) + |A(D) \cap [i, m_i]| \\
&\;= m_i + t_{m_i} - d_i .
\end{align*}
Finally, let $\pi := \{X_1,\ldots, X_k\}$.  
Regardless of whether $\pi$ is a partition or not, it has an overlap graph $\G_\pi$ which is the interval graph of the intervals $[a_i,b_i]$, and if all the $a_i$ are distinct there is a natural ordering of the vertices of $\G_\pi$.

\begin{lem}\mylabel{ldslemma}
The class $\pi$ constructed from $D$ satisfies:
\begin{enumerate}
\item[(i)] $\pi$ is a partition of $[n_D]$.
\item[(ii)] $X_k$ is the singleton block $\{a_k = n_D - d_k\}$.
\item[(iii)] The upper neighborhood of $i$ in $\G_\pi$ is $[i+1,m_i]$.
\item[(iv)] The lower neighbors of $i$ in $\G_\pi$ are the vertices $i_\delta := \max\{ j \in [i-1] : d_j = \delta\}$ for $\delta = 0,1,\ldots,d_i-1$.
\item[(v)] $\pi$ has vertex-order lower degree sequence $D_2(\pi)=D$.
\end{enumerate}
\end{lem}

\begin{proof}
Note that the $t_i$ are weakly increasing.  Part (ii) is obvious because $m_k=k$.

We show that $a_1<a_2<\cdots<a_k$.  This will imply that, if we linearly order the $X_i$ by least element, the ordering is subscript order.  We calculate the increment:
\[
a_{i+1}-a_i = 1 + (t_i - t_{i-1}) + (d_i-d_{i+1}) = 1 + u_i + (d_i-d_{i+1}) .
\]
If $i$ is not an ascent, then $d_i-d_{i+1}\geq0$ and $u_i=0$, so $a_{i+1}-a_i > 0$.  If $i$ is an ascent, then $d_i-d_{i+1} = -1$ and $u_i=1$; again $a_{i+1}-a_i > 0$.

Now we prove (iii).  The first step is an equivalence, in which we assume $i<j$:
\[
i \sim j \iff a_j \leq b_i \iff j + t_{j-1} - d_j \leq m_i + t_{m_i} - d_i .\
\]
If $j \leq m_i$, then $d_j > d_i$ so $j + t_{j-1} - d_j < m_i + t_{m_i} - d_i$.
If $j>m_i$, then $t_{j-1} \geq t_{m_i}$.  Suppose $a_j \leq b_i$.  Then $j-m_i \leq (j-m_i) + (t_{j-1}-t_{m_i} \leq d_j - d_{m_i} \leq j - m_i.$  Thus, the inequalities are all equalities.  It follows that $d_j-d_{m_i} = j-m_i$, which makes $m_i$ an ascent, but by definition $m_i$ can never be an ascent.  Note that in both cases, $a_j = b_i$ is impossible.  Therefore, no $b_i$ and $a_j$ can be equal if $i<j$.  Neither can $b_j$ equal $a_i$, because $b_j\geq a_j > a_i$.

Recall that $n_D = k + t_k$.  Let $d_j$ be the last member of $D$ that has value $0$.  Then $t_j=k$ so $b_j=n_D$.  We prove that all other $b_i < b_j$.  If $i<j$, then $t_i < j$ so $b_i < a_j \leq b_j$.  If $i>j$, then $d_i$ is positive; therefore, $b_i = m_i + t_{m_i} - d_i < m_i + t_{m_i} \leq k + t_k = b_j$.

Since $n_D$ is the number of blocks plus the number of doubleton blocks, and we have shown that none of the $a$'s and $b$'s can be equal except when $a_i=b_i$, we conclude that $\pi$ is a partition of $[n_D]$.

It remains to prove (iv), from which (v) follows at once.  
Let $j < i$.  We know from (iii) that 
\[
j \sim i \iff m_j \geq i \iff d_{j+1}, d_{j+2}, \ldots, d_i > d_j .
\]
It is easy to see that the last property implies, and is implied by, the property that $j = i_\delta$ for some $\delta$, specifically for $\delta = d_j$.
\end{proof}

The lemma obviously implies Proposition \ref{lds}.
\end{proof}

\begin{prop}\mylabel{increasinglds}
A sequence $D=(d_1,\ldots,d_k)$ is the increasing lower degree sequence $D_1(\pi)$ of $\G_\pi$ for some partition $\pi\in\Pi_n$ if and only if it satisfies 
\begin{gather*}
0=d_1\leq d_2 \leq \cdots \leq d_k, \\
d_{i+1} \leq d_i+1 \text{ for every } i\in[k-1], \\
n \geq k+d_k.
\end{gather*}
\end{prop}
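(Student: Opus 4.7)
The plan is to deduce Proposition \ref{increasinglds} from Proposition \ref{lds} by carefully comparing the ascent counts of a sequence in vertex order versus sorted order. The key observation is that when $D=(d_1,\ldots,d_k)$ is already non-decreasing with $d_1=0$ and $d_{i+1}\leq d_i+1$, every step is either a repeat or a $+1$ ascent, so the number of ascents is exactly $d_k$. Thus for such $D$ the condition $n\geq k+|A(D)|$ in Proposition \ref{lds} becomes $n\geq k+d_k$.

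For necessity, suppose $D=D_1(\pi)$ for some $\pi\in\Pi_n$ with $|\pi|=k$. Let $D_2(\pi)=(d_1',\ldots,d_k')$ be the vertex-order sequence. Then $d_1'=0$ and $d_{i+1}'\leq d_i'+1$ by Proposition \ref{lds}. Sorting gives $d_1=0$ immediately. To verify $d_{i+1}\leq d_i+1$ for the sorted sequence, I would argue that since $D_2(\pi)$ starts at $0$ and cannot jump up by more than $1$, every integer between $0$ and $\max_i d_i'$ must appear among its entries; consequently the sorted sequence has no gaps, giving the bounded-increment property. Finally, $n\geq k+|A(D_2(\pi))|$ from Proposition \ref{lds}, and I would observe that $|A(D_2(\pi))|$ is at least the maximum value attained by $D_2(\pi)$, because reaching height $\max_i d_i'$ from $0$ via $\leq +1$ steps requires at least $\max_i d_i'$ ascents; since $\max_i d_i'=d_k$ in the sorted $D$, this yields $n\geq k+d_k$.

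For sufficiency, suppose $D$ satisfies the three listed conditions. Since $D$ is itself non-decreasing with $d_1=0$ and $d_{i+1}\leq d_i+1$, the first two conditions of Proposition \ref{lds} hold for $D$. The third requires $n\geq k+|A(D)|$; but as noted above, the bounded-increment property forces $|A(D)|=d_k$, so the hypothesis $n\geq k+d_k$ is exactly what is needed. Applying Proposition \ref{lds} produces a partition $\pi\in\Pi_n$ with $D_2(\pi)=D$, and because $D$ is already sorted, $D_1(\pi)=D$ as well.

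The only subtle step is the inequality $|A(D_2(\pi))|\geq \max_i d_i(\pi)$ used in necessity; this is the main conceptual point, and it rests on the simple telescoping observation that under $d_{i+1}\leq d_i+1$, traversing from $0$ to any value $v$ demands at least $v$ ascents regardless of intervening descents. Everything else is bookkeeping built on Proposition \ref{lds}, so no additional construction is required.
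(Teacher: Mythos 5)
Your proof is correct and follows the same route as the paper: both deduce Proposition \ref{increasinglds} from Proposition \ref{lds} by observing that for a non-decreasing sequence with $d_1 = 0$ and $d_{i+1} \leq d_i + 1$, the number of ascents equals $d_k$, and that any rearrangement satisfying the Proposition \ref{lds} conditions has at least $d_k$ ascents. You simply spell out two steps the paper leaves terse: the telescoping argument that a sequence starting at $0$ with increments bounded by $+1$ must pass through every integer value up to its maximum (hence the sorted sequence has no gaps), and the counting argument showing $|A(D_2(\pi))| \geq \max_i d_i'$.
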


\begin{proof}
$D$ is a lower degree sequence if and only if some rearrangement of it, $D'$, satisfies the conditions of Proposition \ref{lds}.  
The first two properties follow from those of $D'$.  
Neglecting the exact value of $n$, we see from Proposition \ref{lds}  that $D$ is a lower degree sequence if and only if it is itself, without rearrangement, a vertex-order lower degree sequence.
The lower bound on $n$ follows from that fact, because the smallest possible number of ascents of any rearrangement of $D$ is $d_k = \max d_i$.
\end{proof}

\section{The total chromatic polynomial}

We did not try to evaluate the chromatic polynomial of the Catalan and other graphs, all the more the total chromatic polynomial, because the results do not appear to be very nice nor are they known to count anything interesting.  However, the total chromatic polynomial can be computed in terms of the zero-free polynomial, extending the balanced expansion 
\[
\chi_\Phi(q) = \sum_{\substack{W \subseteq V\\ W^c \text{stable}}} \chi_{\Phi\ind W}^*(q-1)
\]
from \cite[Theorem III.6.1]{BG}.  Now we develop this computation to the extent that evaluating the polynomial for the Catalan, Shi, or Linial graph becomes a mechanical exercise.

\begin{prop}\mylabel{totalexpansion}
Let $\Phi$ be a gain graph with underlying graph $\G$.  The total chromatic polynomial satisfies the expansion identity
\[
\tilde\chi_\Phi(q,z) = \sum_{W \subseteq V}  \chi_{\G\ind W^c}(z) \chi_{\Phi\ind W}^*(q-z) .
\]
\end{prop}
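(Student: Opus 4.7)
My plan is to prove the identity at the level of the algebraic definition \eqref{E:tcpoly}, by rearranging the sum over edge subsets so that each balanced component of $(V,S)$ is independently assigned to either $W$ or $W^c$. The combinatorial intuition is that a vertex in $W$ will receive one of the $q-z$ "nonzero" colors in $\fG \times [k]$, while a vertex in $W^c$ will receive one of the $z$ "zero" colors, and since these two color sets are disjoint, edges straddling $W$ and $W^c$ contribute nothing to the inclusion-exclusion.

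To implement this, I would first write $q^{b(S)} = ((q-z)+z)^{b(S)}$ and expand binomially, so that
\[
\tilde\chi_\Phi(q,z) = \sum_{S \subseteq E} \sum_{U} (-1)^{|S|} (q-z)^{|U|} z^{c(S)-|U|},
\]
where $U$ ranges over subsets of the set of balanced components of $(V,S)$. Each balanced component chooses to be "colored nonzero" (weight $q-z$, joining $W$) or "colored zero" (weight $z$, joining $W^c$), while every unbalanced component must contribute weight $z$ and lie in $W^c$.

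Next I would re-index by $(W, S_1, S_2)$, where $W := \bigcup_{C \in U} V(C)$, $S_1 := S \cap E(\Phi \ind W)$, and $S_2 := S \cap E(\G \ind W^c)$. Because components of $(V,S)$ do not straddle $W$ and $W^c$, no edge of $S$ joins the two sides, so $S = S_1 \sqcup S_2$. By construction $S_1$ is a balanced subset of $E(\Phi \ind W)$ with $c(W, S_1) = |U|$, and $S_2$ is an arbitrary subset of $E(\G \ind W^c)$ with $c(W^c, S_2) = c(S) - |U|$; conversely every such triple $(W, S_1, S_2)$ arises uniquely. The double sum then factors as
\[
\sum_{W \subseteq V} \Bigl(\sum_{\substack{S_1 \subseteq E(\Phi \ind W) \\ \text{balanced}}} (-1)^{|S_1|} (q-z)^{c(W,S_1)}\Bigr) \Bigl(\sum_{S_2 \subseteq E(\G \ind W^c)} (-1)^{|S_2|} z^{c(W^c,S_2)}\Bigr),
\]
and the two inner sums are $\chi^*_{\Phi \ind W}(q-z)$ (the $z=0$ specialization of \eqref{E:tcpoly}, in which \eqref{E:tcpoly} collapses to the sum over balanced subsets) and $\chi_{\G \ind W^c}(z)$ (the classical Whitney subset expansion of the ordinary chromatic polynomial), respectively.

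The main obstacle is the bookkeeping in the re-indexing. I need to verify carefully that the correspondence $(S,U) \leftrightarrow (W, S_1, S_2)$ is a bijection, and in particular that the condition "$S_1$ balanced in $\Phi \ind W$" in the re-indexed sum matches the condition that the components of $(W, S_1)$ are exactly the balanced components of $(V,S)$ selected by $U$ (using the fact that a connected subgraph of a balanced graph is balanced). Once this is settled, the exponent matching ($|U| \leftrightarrow c(W, S_1)$ and $c(S)-|U| \leftrightarrow c(W^c, S_2)$) and the identification of the inner sums are routine.
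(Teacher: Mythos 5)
Your proof is correct and takes a genuinely different route from both of the paper's proofs. The paper gives two arguments: a combinatorial one that counts proper colorations directly by splitting $V$ into the vertices receiving a color from $\fG\times[k]$ and those receiving a color from $[z]$ (this requires $\fG$ finite), and an algebraic one that proceeds by induction on the number of links via deletion-contraction. You instead manipulate the subset-expansion \eqref{E:tcpoly} directly: expanding $q^{b(S)}=((q-z)+z)^{b(S)}$ binomially, interpreting the binomial coefficient as a choice $U$ of balanced components, and re-indexing $(S,U)\leftrightarrow(W,S_1,S_2)$. The bijection you flag as the ``main obstacle'' does go through: since $W=\bigcup_{C\in U}V(C)$ is a union of $S$-components, $S$ splits as $S_1\sqcup S_2$ with no edges straddling; the components of $(W,S_1)$ are exactly the $C\in U$ (hence $c(W,S_1)=|U|$ and $S_1$ is balanced, as any circle in $S_1$ lies in some $C\in U$), the components of $(W^c,S_2)$ are the remaining $S$-components (hence $c(W^c,S_2)=c(S)-|U|$), and conversely any pair $(S_1,S_2)$ with $S_1$ balanced reconstructs $(S,U)$ uniquely. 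The two inner sums are then exactly the subset expansions $\chi^*_{\Phi\ind W}(q-z)=\sum_{S_1\text{ bal.}}(-1)^{|S_1|}(q-z)^{c(W,S_1)}$ (the $z=0$ case of \eqref{E:tcpoly}) and the Whitney expansion $\chi_{\G\ind W^c}(z)=\sum_{S_2}(-1)^{|S_2|}z^{c(W^c,S_2)}$. Your argument has the same generality as the paper's algebraic proof (no finiteness assumption on $\fG$) while avoiding the induction entirely, and it makes the combinatorial proof's color-splitting intuition visible at the level of the rank-type generating function; the trade-off is that it requires knowing and invoking the Whitney subset expansion for ordinary multigraphs, whereas the paper's inductive proof is self-contained given the deletion-contraction law.
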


\begin{lem}\mylabel{totalmult}
If $\Phi_1, \ldots, \Phi_c$ are the components of $\Phi$, then 
$$
\tilde\chi_\Phi(q,z) = \tilde\chi_{\Phi_1}(q,z) \cdots \tilde\chi_{\Phi_c}(q,z).
$$
\end{lem}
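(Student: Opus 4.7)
My plan is to prove this directly from the algebraic definition \eqref{E:tcpoly} of the total chromatic polynomial, by splitting the sum over subsets $S \subseteq E$ into a product of sums, one over each component's edge set. This is the same method mentioned in passing in the proof of Proposition \ref{P:tcpoly}; here I would spell it out so it can be cited cleanly by Proposition \ref{totalexpansion}.

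By induction on $c$ it suffices to treat the case $c = 2$. Let $E_1 = E(\Phi_1)$ and $E_2 = E(\Phi_2)$, so that $E = E_1 \cupdot E_2$ and $V = V(\Phi_1) \cupdot V(\Phi_2)$. Any $S \subseteq E$ decomposes uniquely as $S = S_1 \cup S_2$ with $S_i = S \cap E_i$. The key observation is that the statistics appearing in the summand are additive across this decomposition: $|S| = |S_1| + |S_2|$, and since no edge of $S$ joins $V(\Phi_1)$ to $V(\Phi_2)$, the components of $(V,S)$ split as the disjoint union of the components of $(V(\Phi_1),S_1)$ and $(V(\Phi_2),S_2)$, giving $c(S) = c_1(S_1) + c_2(S_2)$ and likewise $b(S) = b_1(S_1) + b_2(S_2)$, where $c_i, b_i$ are computed inside $\Phi_i$.

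Substituting into \eqref{E:tcpoly} and factoring yields
\[
\tilde\chi_\Phi(q,z) = \sum_{S_1 \subseteq E_1} \sum_{S_2 \subseteq E_2} (-1)^{|S_1|+|S_2|} q^{b_1(S_1)+b_2(S_2)} z^{[c_1(S_1)-b_1(S_1)] + [c_2(S_2)-b_2(S_2)]} = \tilde\chi_{\Phi_1}(q,z)\, \tilde\chi_{\Phi_2}(q,z).
\]
Iterating gives the full product over $\Phi_1, \ldots, \Phi_c$.

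There is no real obstacle here; the only thing worth flagging is the need to verify the additivity of $b(S)$, which uses the fact that a circle in $(V,S)$ must lie inside a single component of $\Phi$ and therefore its balance is detected within the appropriate $\Phi_i$. With that observation in hand, the argument is purely bookkeeping.
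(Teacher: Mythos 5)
Your proof is correct and spells out exactly the ``standard (and easy)'' algebraic argument that the paper explicitly says it is omitting; indeed, the same splitting of the sum over $S$ into $S\cap E(\Phi_1)$ and $S\cap E(\Phi_2)$ is already invoked in the proof of Proposition \ref{P:tcpoly} for the two-component case. The paper's one-line primary appeal is to the combinatorial definition, which only applies when $\fG$ is finite, whereas your algebraic version (like the paper's omitted one) works for arbitrary gain groups — a small but genuine advantage since Proposition \ref{totalexpansion}, which cites this lemma, is stated for all gain graphs. The key bookkeeping facts you isolate — additivity of $|S|$, $c(S)$, and $b(S)$ across components, the last because every circle lies entirely within one component — are exactly what is needed and are correct.
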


\begin{proof}
This is obvious from the combinatorial definition.  The proof from the algebraic definition is standard (and easy) and is therefore omitted.
\end{proof}

\begin{proof}[Combinatorial Proof of the Proposition]
Assume $\fG$ is finite, $k, z$ are nonnegative integers, and $q=k|\fG|+z$.  We count the proper colorations of $\Phi$ by the color set $\big( [k] \times \fG \big) \cup [z]$.  We pick a vertex subset $W$ to color by $[k] \times \fG$, leaving the complement to be colored by $[z]$.  The number of ways to color $W$ properly is $\chi_{\Phi\ind W}^*(q-z)$.  The number of ways to color $W^c$ properly is $\chi_{\G\ind W^c}(z)$ because a coloration is proper if and only if no edge has the same $[z]$-color at both ends.
\end{proof}

\begin{proof}[Algebraic Proof]
The algebraic proof applies to all gain graphs.  
If $\Phi$ has no links, then the components are the single vertices with their loops, if any, for which the expansion is obviously correct.
Thus, we may apply induction on the number of links.  Assume there is a link $e$.  

We develop the right-hand side of the expansion identity.  
We split the sum according to three cases: $e \in E\ind W$, $e \in E\ind W^c$, and $e$ not in either induced edge set.  
In the first case, $\chi_{\Phi\ind W}^*(q-z) = \chi_{(\Phi\setm e)\ind W}^*(q-z) - \chi_{(\Phi/e)\ind W}^*(q-z).$  
In the second case, $\chi_{\G\ind W^c}(z) = \chi_{(\G\setm e)\ind W^c}(z) - \chi_{(\G/e)\ind W^c}(z).$  Let us write this all out.  The right-hand side equals
\begin{align*}
&\sum_{\substack{W \subseteq V\\ e \in E\ind W}}  \chi_{\G\ind W^c}(z) \chi_{(\Phi\setm e)\ind W}^*(q-z) 
 + \sum_{\substack{W \subseteq V\\ e \in E\ind W^c}}  \chi_{(\G\setm e)\ind W^c}(z) \chi_{\Phi\ind W}^*(q-z) \\
&\qquad + \sum_{\substack{W \subseteq V\\ e \notin (E\ind W) \cup (E\ind W^c)}}  \chi_{\G\ind W^c}(z) \chi_{\Phi\ind W}^*(q-z) \\
&- \bigg[  \sum_{\substack{W \subseteq V\\ e \in E\ind W}}  \chi_{\G\ind W^c}(z) \chi_{(\Phi/e)\ind W}^*(q-z)
+ \sum_{\substack{W \subseteq V\\ e \in E\ind W^c}}  \chi_{(\G/e)\ind W^c}(z) \chi_{\Phi\ind W}^*(q-z)  \bigg] \\
\end{align*}
The first three terms are precisely those in the expansion of $\tilde\chi_{\Phi\setm e}(q,z)$.  
As for the remaining two terms, let $v_e$ be the vertex of $\Phi/e$ that results from contracting $e$.  Then
\begin{align*}
&\sum_{\substack{W \subseteq V\\ e \in E\ind W}}  \chi_{\G\ind W^c}(z) \chi_{(\Phi/e)\ind W}^*(q-z)
+ \sum_{\substack{W \subseteq V\\ e \in E\ind W^c}}  \chi_{(\G/e)\ind W^c}(z) \chi_{\Phi\ind W}^*(q-z)  \\
&= \sum_{\substack{X \subseteq V(\Phi/e)\\ v_e \in E(\Phi/e)\ind X}}  \chi_{\G\ind X^c}(z) \chi_{(\Phi/e)\ind X}^*(q-z)
+ \sum_{\substack{X \subseteq V(\Phi/e)\\ v_e \in E(\Phi/e)\ind X^c}}  \chi_{(\G/e)\ind X^c}(z) \chi_{\Phi\ind X}^*(q-z) 
\end{align*}
because a set $X \subseteq V(\Phi/e)$ corresponds to a set $W = X \setm \{v_e\} \cup \{v,w\}$ if $v,w$ are the endpoints of $e$ in $\Phi$, so that $e$ is in either $E\ind W$ or $E\ind W^c$.  The last two sums are the expansion of $\tilde\chi_{\Phi/e}(q,z)$.
\end{proof}

Call $\Phi$ \emph{complete} if every two vertices are adjacent by at least one edge.  Recall that $n$ denotes the order of $\Phi$.

\begin{cor}\mylabel{totalcomplete}
If $\Phi$ is complete, let $L$ be the set of vertices that support loops.  Then the total chromatic polynomial expands with falling-factorial coefficients: 
\[
\tilde\chi_\Phi(q,z) = \sum_{L \subseteq W \subseteq V}  (z)_{n-|W|} \, \chi_{\Phi\ind W}^*(q-z) .
\]
\end{cor}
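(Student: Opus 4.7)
The plan is to derive the corollary directly from Proposition \ref{totalexpansion} by evaluating the factor $\chi_{\G\ind W^c}(z)$ using the completeness hypothesis together with two standard properties of the ordinary chromatic polynomial: loop nullity, and invariance under simplification (both recorded for weak chromatic functions of graphs in Section 3).

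First I would observe that if $W^c$ contains a vertex of $L$, then $\G\ind W^c$ contains a loop, so $\chi_{\G\ind W^c}(z) = 0$. The condition ``$W^c \cap L = \eset$'' is exactly ``$L \subseteq W$'', so all terms with $L \not\subseteq W$ drop out of the expansion in Proposition \ref{totalexpansion}.

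Second, for each $W$ with $L \subseteq W$, the induced subgraph $\G\ind W^c$ has no loops. Since $\Phi$ is complete, every pair of distinct vertices of $W^c$ is joined by at least one edge of $\G$, so the simplification of $\G\ind W^c$ is the complete graph $K_{W^c}$ on $n - |W|$ vertices. Invariance of $\chi$ under simplification then gives $\chi_{\G\ind W^c}(z) = \chi_{K_{W^c}}(z) = (z)_{n-|W|}$. Substituting these evaluations into Proposition \ref{totalexpansion} produces the stated formula.

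The argument is essentially a one-step calculation, so there is no genuine obstacle; the only thing to keep straight is that ``complete'' for $\Phi$ only supplies the edges between distinct vertices (hence the combinatorial factor $(z)_{n-|W|}$), whereas the presence of loops is governed separately by the set $L$, which is why the outer sum is indexed by $L \subseteq W \subseteq V$ rather than $W \subseteq V$.
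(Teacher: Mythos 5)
Your proof is correct and follows essentially the same route as the paper's: you substitute into Proposition \ref{totalexpansion}, kill the terms with $L \not\subseteq W$ via loop nullity of the ordinary chromatic polynomial, and identify the surviving factors $\chi_{\G\ind W^c}(z)$ with $(z)_{n-|W|}$ using completeness and invariance under simplification. The paper's proof is a one-sentence compressed version of exactly this argument.
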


\begin{proof}
The factor $\chi_{\G\ind W^c}(z)$ equals zero if there is a loop in $W^c$ and otherwise it is the chromatic polynomial of $K_{|W^c|}$.
\end{proof}

Call $\Phi$ \emph{uniform} if all induced subgraphs of the same order are isomorphic.  Then we can write $\Phi_m$ for an induced subgraph of order $m$; $\Phi_n$ is just $\Phi$.  A uniform gain graph that is not complete has no links at all.  If it has any loop, it has the same number of loops with the same gains at every vertex.

\begin{cor}\mylabel{totaluniform}
Suppose $\Phi$ is connected and uniform and has no loops.  Then 
\[
\tilde\chi_\Phi(q,z) = n! \sum_{j=0}^n  \binom{z}{n-j} 
 \frac{1}{j!} \chi_{\Phi_j}^*(q-z) .
\]
In particular, for the chromatic polynomial we have
\[
\chi_\Phi(q) = \chi_{\Phi_n}^*(q-1) + n \chi_{\Phi_{n-1}}^*(q-z) .
\]
\end{cor}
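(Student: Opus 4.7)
The plan is to derive the first identity as a direct specialization of Corollary \ref{totalcomplete}, and then obtain the stated chromatic polynomial identity by setting $z=1$.

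First I would observe that the hypotheses force $\Phi$ to be complete. The remark immediately before Corollary \ref{totaluniform} states that a uniform gain graph which is not complete has no links; combined with connectedness and $n\ge 1$, this means every pair of distinct vertices of $\Phi$ is joined by at least one edge. Since $\Phi$ also has no loops, the set $L$ in Corollary \ref{totalcomplete} is empty. Applying that corollary yields
\[
\tilde\chi_\Phi(q,z) = \sum_{W \subseteq V}  (z)_{n-|W|} \, \chi_{\Phi\ind W}^*(q-z) .
\]

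Next I would use uniformity to collect terms. Because every induced subgraph of order $j$ is isomorphic to $\Phi_j$, and the zero-free chromatic polynomial is an isomorphism invariant (by Proposition \ref{P:tcpoly}), the value $\chi_{\Phi\ind W}^*(q-z)$ depends only on $j=|W|$. Grouping the $\binom{n}{j}$ sets $W$ of each size gives
\[
\tilde\chi_\Phi(q,z) = \sum_{j=0}^n \binom{n}{j} (z)_{n-j} \, \chi_{\Phi_j}^*(q-z).
\]
Finally, rewriting the coefficient via $(z)_{n-j} = (n-j)!\,\binom{z}{n-j}$ gives $\binom{n}{j}(z)_{n-j} = \frac{n!}{j!}\binom{z}{n-j}$, which is exactly the claimed expansion.

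For the second assertion I would specialize to $z=1$, using $\chi_\Phi(q) = \tilde\chi_\Phi(q,1)$ from \eqref{E:cp}. The binomial $\binom{1}{n-j}$ vanishes unless $n-j\in\{0,1\}$, so only the $j=n$ and $j=n-1$ terms of the sum survive, contributing $\chi_{\Phi_n}^*(q-1)$ and $n\,\chi_{\Phi_{n-1}}^*(q-1)$ respectively.

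The steps are all routine once Corollary \ref{totalcomplete} is in hand; the only substantive point to check is the structural observation that ``connected + uniform + no loops'' forces completeness, which is what legitimizes using Corollary \ref{totalcomplete} in the first place. The rest is bookkeeping with falling factorials and binomial coefficients.
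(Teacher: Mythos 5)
Your proof is correct and is exactly the intended derivation: the paper gives no explicit proof of this corollary, relying precisely on the chain you spell out (connected $+$ uniform $\Rightarrow$ complete, then specialize Corollary \ref{totalcomplete}, then group by $|W|$ and convert $(z)_{n-j}$ to a binomial coefficient). One small bonus of writing it out: your $z=1$ specialization gives $n\,\chi_{\Phi_{n-1}}^*(q-1)$, which shows the paper's displayed ``$\chi_{\Phi_{n-1}}^*(q-z)$'' in the second identity is a typo for $\chi_{\Phi_{n-1}}^*(q-1)$.
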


\newcommand\XGF{\operatorname{XGF}}
If there is an infinite uniform sequence $\Phi_0, \Phi_1, \ldots$ we can write Corollary \ref{totaluniform} in terms of the exponential generating function, $\XGF(a_n) := \sum_{n=1}^\infty a_n t^n/n!$, as
\begin{equation}\mylabel{uniformgf}
\XGF(\tilde\chi_{\Phi_n}(q,z)) = (1+t)^z \XGF(\chi_{\Phi_n}^*(q-z)).
\end{equation}

Since the Catalan, Shi, and Linial graphs satisfy the hypotheses of Corollary \ref{totaluniform} and the graphs between Catalan and Shi are complete, it is a routine matter to evaluate their chromatic and total chromatic polynomials by means of Corollary \ref{totaluniform} or Equation \eqref{uniformgf}.  The intermediate graphs $SC(G)$ fall under Corollary \ref{totaluniform}.

\section{Final remarks:  Extended arrangements; weighted gain graphs; biased graphs}

There are various ways to extend the Linial and Shi arrangements to more hyperplanes; each one corresponds to an integral gain graph of the form $\{-l+1, -l+2, \ldots, m\}\vec K_n$, where $l,m\geq0$ and $m>-l$.  For the characteristic polynomial the cases $l=0, 1$ were treated by Athanasiadis \cite[Section 4]{Athan}, the general case by Postnikov and Stanley \cite[Section 9]{PS}.  It would certainly be interesting to apply our results to these arrangements.

A \emph{weighted integral gain graph} $(\Phi,h)$ is an integral gain graph $\Phi$ together with a function $h : V \to \bbZ$, called the \emph{weight} function.  By the contraction rule for weights, when contracting a neutral edge set $S$ the contracted weights are $h_S(W) := \max_{v\in W} h(v)$, where $W$ is the vertex set of a component of $S$.  
For a weighted integral gain graph $(\Phi,h)$ one can define an integral chromatic function $\chiZ_{(\Phi,h)}(q)$ similar to that of $\Phi$; indeed, our $\chiZ_\Phi(q)$ is $\chiZ_{(\Phi,0)}(q)$ (all weights equal $0$).  
If $e$ is a neutral link, then (from \cite{SOA}, reinterpreted from the equivalent rooted integral gain graphs of that paper) 
 \[
 \chiZ_{(\Phi,h)}(q) = \chiZ_{(\Phi\setm e,h)}(q) - \chiZ_{(\Phi/e,h_e)}(q) .
 \]
This shows there are interesting functions $F$ on other structures than ordinary graphs and gain graphs---in fact, on arbitrary weighted gain graphs as defined in \cite{HPT}---to which the general reductions Theorems \ref{first} and \ref{second} can apply.

Another kind of generalization is to biased graphs \cite{BG}.  The balanced circles in a gain graph form a \emph{linear subclass}, which means that if a theta subgraph contains two balanced circles, then its third circle is balanced.  A \emph{biased graph} is a pair $\Omega = (\G,\cB)$ where $\cB$ is any linear subclass of the circles of $\G$.  An edge set is called balanced when every circle belongs to $\cB$.  Thus, a biased graph is a combinatorial generalization of a gain graph.  There are no distinguished neutral edges in a biased graph, but one can choose any balanced edge set to play the same role.  Thus, a deletion-contraction formula like that of Theorem \ref{first} exists.  There are also addition-contraction results where one adds edges to a balanced subgraph, though since it is probably not true that one can extend any balanced subset to a balanced $K_n$ (this is an open question), there would be no complete generalization of Theorem \ref{second}.

\section*{Acknowledgement}

We thank the referee for astute refereeing that helped to perfect the exposition.


\end{document}